\renewcommand {\a}{ \alpha }
\newcommand{\y}{\eta}
\newcommand{\e}{\epsilon}
\newcommand{\g}{\gamma}
\newcommand{\G}{\Gamma}
\newcommand{\D}{\Delta}
\newcommand{\s}{\sigma}
\newcommand{\N}{ \mathbb N}
\newcommand{\CC}{\mathcal C}
\newcommand{\CD}{\mathcal D}
\newcommand{\CH}{\mathcal H}
\newcommand{\CX}{\mathcal X}
\newcommand{\CN}{\mathcal N}
\newcommand {\GH}{\mathfrak H}
\newcommand {\ba}{\mathbf a}
\newcommand {\bh}{\mathbf h}
\newcommand {\BA}{\mathbf A}
\newcommand {\BH}{\mathbf H}
\newcommand {\BT}{\mathbf T}
\newcommand{\wt}{\widetilde}
\DeclareMathOperator{\res}{\restriction}
\DeclareMathOperator{\tr}{Tr}
 \DeclareMathOperator{\gen}{Gen}
\newtheorem{thm}{Theorem}[section]
\newtheorem{prop}[thm]{Proposition}
\theoremstyle{definition}
\newtheorem{example}[thm]{Example}
\theoremstyle{remark}
\numberwithin{equation}{section}
\newcommand{\thmref}[1]{Theorem~\ref{#1}}
\newcommand{\bsymb}{\boldsymbol}
\newcommand{\vs}{\vskip0.2cm}
\begin{document}

\title[negative eigenvalues of Schr\"odinger operator on graphs]
{Remarks on counting negative eigenvalues of Schr\"odinger
operator on regular metric trees}

\dedicatory{To Victor Petrovich Khavin, a friend and colleague, on
his 75-th birthday}

\author{ Michael Solomyak}

\address{Department of Mathematics
The Weizmann Institute of Science
Rehovot 76100 Israel}
\email{michail.solomyak@weizmann.ac.il}

\begin{abstract}
We discuss estimates on the number $N_-(\BH_{\a V})$ of negative eigenvalues of the Schr\"odinger operator
$\BH_{\a V}=-\D-\a V$
on regular metric trees, as depending on the properties of the potential $V\ge 0$ and on the value of the large parameter $\a$. We obtain conditions on $V$ guaranteeing the behavior $N_-(\BH_{\a V})=O(\a^p)$ for any given $p\ge 1/2$.
For a special class of trees we show that these conditions are not only sufficient but also necessary.
For $p>1/2$ the order-sharp estimates involve a (quasi-)norm of $V$ in some `weak' $L_p$- or $\ell_p(L_1)$-space. We show that
the results obtained can be easily derived from the ones of \cite{NS}.

The results considerably improve the estimates found in the recent paper \cite{EFK}.

\end{abstract}

\date{\today}

\maketitle

\section{Introduction}\label{intro}
\subsection{Preliminaries.}
In this note we discuss estimates for the number $N_-(\BH_V)$ of
the negative eigenvalues of the Schr\"odinger operator
$\BH_V=-\D-V$ on regular metric trees. The classical Birman -- Schwinger principle allows one
to reduce such estimates to the study of the spectrum of a certain compact, self-adjoint
operator acting in an appropriate Hilbert space; see Section \ref{prfs} below for an explanation
of this reduction. The detailed study of this spectrum was initiated by
Naimark and the author \cite{NS}, and some estimates for  $N_-(\BH_V)$ immediately follow from there.

Further results on the behavior of $N_-(\BH_V)$ were obtained
recently by Ekholm, Frank, and Kova\v{r}\'{i}k \cite{EFK}. The
main novelty in \cite{EFK} is an analysis of the connection
between the estimates for $N_-(\BH_V)$ and the {\it global
dimension} of a regular tree. This is an important step in the
understanding of spectral properties of the Schr\"odinger operator
on trees.

The proofs in \cite{EFK} are based upon a scheme
developed by Naimark and Solomyak \cite{NS, NS1}, and independently by Carlson \cite{C}.
Our main goal in this note is to
show that the estimates obtained in \cite{EFK} can be considerably improved by the
direct reduction to the results of \cite{NS}. We also show that
the estimates in terms of the `weak $L_p$-spaces'
obtained by this approach, are order-sharp in the strong coupling
limit. This means that when applied to the operator
$\BH_{\a V}$, where $\a>0$ is a large parameter, these estimates are of the
type
\begin{equation}\label{estim}
    N_-(\BH_{\a V})=O(\a^p),\qquad \a\to\infty,
\end{equation}
where `$O$' cannot be replaced by `$o$'. On the contrary, in the
estimates obtained in \cite{EFK} in terms of the standard $L_p$,
such replacement is always possible.

We also discuss (in \thmref{th:p}) estimates of a different nature,
where the function $N_-(\BH_{\a V})$
is evaluated in terms of a certain number sequence associated with the potential.
In \thmref{th:pb} we single out a sub-class of regular trees,
for which the conditions guaranteeing the order $N_-(\BH_{\a
V})=O(\a^p)$ with $p>1/2$ are not only sufficient, but also necessary.

Any metric tree has local dimension one. As a consequence, for the
rapidly decaying potentials $V$ one always has
 $N_-(\BH_{\a V})=O(\a^{1/2})$. For the slowly decaying
 potentials, such that \eqref{estim} is satisfied with some
 $p>1/2$, the estimates in $L^p$-terms are never order-sharp: any
 order-sharp estimate must involve a norm, or a quasi-norm, of some
 non-separable function space. In this connection, see a discussion in
 \cite{RS}. The results of the present paper once
more substantiate this claim.\vs

\subsection{Geometry of a tree.}\label{prel}
Let $\G$ be a rooted metric tree, with the root $o$. We suppose
that the number of vertices and the number of edges are infinite,
and that each edge has finite length.
The distance $\rho(x,y)$ and the partial ordering $x\preceq y$ on
$\G$ are introduced in a natural way, and we write $x\prec y$ if
and only if $x\preceq y$ and $x\neq y$. We denote $|x|=\rho(o,x)$.
On each edge $e\subset\G$ the partial ordering turns into the
standard ordering defined by the inequality $|x|\le|y|$. With each
vertex $v\in\G$ we associate its {\it generation} $\gen(v)$, i.e.
the number of vertices $w$ such that $o\preceq w\prec v$. In
particular, $\gen(o)=0$. The {\it branching number} $b(v)$ of a
vertex $v$ is the number of edges emanating from $v$. We suppose
that $b(o)=1$ and that $1<b(v)<\infty$ for any vertex $v\neq o$.

In this paper we consider the regular trees, see
\cite{NS,NS1} for more detail. A tree $\G$ is said to be {\it
regular} (or {\it symmetric}, or {\it radial}), if all
vertices of the same generation have equal branching numbers and
lie on the equal distance from the root. Each regular tree is
completely determined by specifying two number sequences,
$\{b_n\}$ and $\{t_n\}$, where $b_n=b(v)$ and $t_n=|v|$ for any
vertex $v$ of generation $n,\ n=0,1,\ldots$. We suppose that the
sequence $\{t_n\}$ is unbounded. The {\it
branching function} of $\G$, defined as
\begin{equation}\label{g}
    g_0(t)= b_0b_1\ldots b_n\ {\text{for}}\ t_n< t\le t_{n+1},\qquad n=1,2,\ldots
\end{equation}
is an important characteristic of a regular tree. \vs

The natural measure $dx$ on $\G$ is induced by the Lebesgue
measure on the edges. The spaces $L_p(\G)$ are understood as the
Lebesgue spaces with respect to this measure. The norm in
$L_p(\G)$ is denoted by $\|\cdot\|_p$; for $p=2$, we write simply
$\|\cdot\|$.

\subsection{Sobolev spaces on a tree}\label{sob}
Differentiation, in the direction compatible with the ordering on
$\G$, is well-defined at any point $x\in\G$ except for the
vertices. The Sobolev space $H^1(\G)$ consists of all continuous
functions $u$ on $\G$, such that $u\res e\in H^1(e)$ on each edge
$e\subset \G$, and the condition
\[ \|u\|^2_{H^1(\G)}:=\int_\G(|u'(x)|^2+|u(x)|^2)dx<\infty\]
is satisfied. It is easy to show that any function $u\in H^1(\G)$
vanishes as $|x|\to\infty$, see, e.g., Lemma 3.1 in \cite{S}. It
immediately follows that the functions $u\in H^1(\G)$ with bounded
support are dense in $H^1(\G)$. The boundary condition $u(o)=0$
selects the subspace
\[H^{1,0}(\G)\subset H^1(\G).\]
The functions $u\in H^{1,0}(\G)$ with bounded support are dense in
this subspace.

Along with $H^{1,0}(\G)$, we need also the {\it homogeneous}
Sobolev space $\CH^{1,0}(\G)$ formed by all continuous
functions $u$ on $\G$, such that $u(o)=0$, $u\res e\in H^1(e)$ on
each edge $e\subset \G$, and $u'\in L_2(\G)$. By definition,
\[ \|u\|_{\CH^1(\G)}=\|u'\|.\]

In contrast with the space $H^{1,0}(\G)$, the functions $u\in
\CH^{1,0}(\G)$ with bounded support are not always dense in
$\CH^{1,0}(\G)$. As a consequence, the set $H^{1,0}(\G)$, which is
evidently contained in $\CH^{1,0}(\G)$, is not always dense in the
latter space. This property depends on geometry of the tree. A
tree such that $H^{1,0}(\G)$ is dense in $\CH^{1,0}(\G)$ is called
{\it recurrent}, otherwise it is called {\it transient}. We shall
denote the closure of $H^{1,0}(\G)$ in $\CH^{1,0}(\G)$ by
$\CH^\circ(\G)$. So, $\CH^\circ(\G)=\CH^{1,0}(\G)$ if and only if
the tree $\G$ is recurrent.

A simple necessary and sufficient condition of transiency is known
for the regular trees, see, e.g., \cite{NS1}, Theorem 3.2. Namely,
such tree is transient if and only if its {\it reduced height}
\begin{equation}\label{trans}
    l(\G):=\int_\G \frac{dt}{g_0(t)}
\end{equation}
is finite.

\section{Schr\"odinger operator on $\G$.}\label{schr}
Our main object in this paper is the Schr\"odinger operator on $\G$,
\begin{equation}\label{sch}
\BH_{\a V}=-\D_D-\a V,
\end{equation}
with the non-negative potential $V(x)$ and a large
parameter ({\it the coupling constant}) $\a>0$. In \eqref{sch} the
symbol $\D_D$ stands for the Dirichlet Laplacian on $\G$. In
Section \ref{neum} we discuss also an analogue of \eqref{sch} for the Neumann
Laplacian $\D_N$.

The operator \eqref{sch} is defined via its quadratic form which
is
\begin{equation}\label{shrqf}
\bh_{\a V}[u]=\int_\G (|u'|^2-\a V|u|^2)dx,\qquad u\in
H^{1,0}(\G).
\end{equation}

We are interested in estimating the number of the negative
eigenvalues of the operator $\BH_{\a V}$ in terms of the potential
$V$ and the coupling constant $\a$.\vs

Assume that the potential $V\ge 0$ is such that the inequality
\begin{equation}\label{hw}
    \int_\G V|u|^2dx\le C_V\int_\G|u'|^2dx,\qquad\forall u\in
    H^{1,0}(\G)
\end{equation}
is satisfied. By the continuity, it extends to all
$u\in\CH^\circ(\G)$. We assume that $C_V$ is the least possible
constant in \eqref{hw}. We call any such potential $V$ {\it a
Hardy weight on $\CH^\circ(\G)$}, and we say that the Hardy weight
is {\it normalized}, if $C_V=1$.  If $V$ is a Hardy weight, the
operator \eqref{sch} is well-defined at least for small values of
$\a$. Some further assumptions about $V$, that we impose later,
imply finiteness of the negative spectrum of this operator.

In general, for a self-adjoint, bounded from below operator $\BH$
in a Hilbert space $\GH$, whose negative spectrum is finite, we
denote by $N_-(\BH)$ the number of its negative eigenvalues
counted according to their multiplicities. \vs

For estimating the quantity $N_-(\BH_{\a V})$ it is important to
have a description of the Hardy weights on $\CH^\circ(\G)$. Such
description, for general trees and the Hardy weights on a wider
class $\CH^{1,0}(\G)$, was obtained in \cite{EHP}. A similar
result for $\CH^\circ(\G)$ seems to be unknown so far. The
situation changes if we restrict ourselves to the regular trees
and to the {\it symmetric} weights
\begin{equation*}
    V(x)=v(|x|),\qquad \forall x\in\G
\end{equation*}
where $v\ge 0$ is a measurable function on $[0,\infty)$. For the
regular trees an exhaustive description of all symmetric Hardy
weights can be easily derived from the classical Hardy-type
inequalities, see, e.g., \cite{OK}, or Section 1.3 in the book
\cite{M}. The result of such reduction was presented in
\cite{NS1}, Theorem 5.2, but only for the transient regular trees.
We reproduce it below, including also the recurrent case. The
proof for this case remains the same.

\begin{prop}\label{prhardy} Let $\G$ be a regular tree, and let
$g_0(t)$ be its branching function \eqref{g}. Suppose
$V(x)=v(|x|)\ge 0$ is a symmetric weight function on $\G$.

1$^\circ$ Let $\G$ be recurrent. Then $V(x)$ is a Hardy weight on
$\CH^\circ(\G)$ if and only if
\begin{equation*}
    B_0(v):=\sup_{t>0}\left(\int_t^\infty v(s)g_0(s)ds\cdot\int_0^t
    \frac{ds}{g_0(s)}\right)<\infty.
\end{equation*}
Moreover, the least possible constant $C_V$ in \eqref{hw}
satisfies
\[ B_0(v)\le C_V\le 4B_0(v).\]

2$^\circ$ Let $\G$ be transient. Then $V(x)$ is a Hardy weight on
$\CH^\circ(\G)$ if and only if the following two conditions are
satisfied:
\begin{equation}\label{hardy}
B_1(v):=\sup\limits_{t>t_1}\left(\int_{t_1}^t v(s)g_0(s)ds\cdot
\int_t^\infty\frac{ds}{g_0(s)}\right)<\infty;
\end{equation}
\begin{equation}\label{hardy1}
B_2(v):=\sup_{t<t_1}\left(t\cdot\int_t^{t_1}v(s)ds\right)<\infty.
\end{equation}
Moreover,
\begin{gather*}
C_V\le 4(B_1(v)+B_2(v));\qquad
B_1(v)\le C_V;\
B_2(v)\le\left(1+\frac{b_1t_1}{t_2-t_1}\right)C_V.
\end{gather*}
\end{prop}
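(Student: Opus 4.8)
The plan is to reduce the Hardy inequality \eqref{hw} to a one‑dimensional weighted inequality on the half‑line and then to invoke the classical Muckenhoupt criteria. Since $V$ is symmetric, I would first pass to the ``angular'' sector decomposition of the Dirichlet form used in \cite{NS,NS1} and show that the radial sector is the critical one. A function belonging to an angular sector attached to a vertex of generation $m$ is supported on the corresponding subtree, vanishes at that vertex (distance $t_m$), and its two integrals reduce to $\int_{t_m}^\infty v|\psi|^2 g_0\,dt$ and $\int_{t_m}^\infty|\psi'|^2 g_0\,dt$ with $\psi(t_m)=0$ (the normalizing factor $g_0(t_m)$ cancels in the ratio). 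Extending such a $\psi$ by zero to $[0,t_m]$ produces an admissible radial function $f$ on $[0,\infty)$ with $f(0)=0$ and identical integrals; hence the radial inequality already implies the inequality in every higher sector. Together with the trivial inclusion of radial functions, this shows that $C_V$ equals the best constant in
\[
\int_0^\infty v(t)\,|f(t)|^2 g_0(t)\,dt \le C_V \int_0^\infty |f'(t)|^2 g_0(t)\,dt,\qquad f(0)=0,
\]
taken over the radial functions belonging to $\CH^\circ(\G)$.

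The key point is to identify this class of radial functions, and here the dichotomy enters. In the recurrent case $\CH^\circ(\G)=\CH^{1,0}(\G)$, so $f$ ranges over all functions with $f(0)=0$ and $f'\in L_2(g_0\,dt)$, with no restriction at infinity. This is exactly the boundedness of the Hardy operator $\phi\mapsto\int_0^x\phi(s)\,ds$ from $L_2(g_0\,dt)$ to $L_2(vg_0\,dt)$, and Muckenhoupt's theorem gives the single condition $B_0(v)<\infty$ with the sharp bounds $B_0(v)\le C_V\le 4B_0(v)$. In the transient case $l(\G)=\int_0^\infty dt/g_0(t)<\infty$, every radial function in $\CH^\circ(\G)$ must also vanish at infinity: for compactly supported $f$ one has $|f(t)|\le(\int_t^\infty ds/g_0(s))^{1/2}\|f'\|$, and since $\CH^\circ(\G)$ is the closure of such functions this bound persists in the limit, forcing $f(t)\to0$. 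Thus the one‑dimensional problem becomes the two‑endpoint Dirichlet inequality $f(0)=f(\infty)=0$, and the non‑decaying functions responsible for $B_0(v)$ are now excluded, so the condition is genuinely weaker.

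For the two‑endpoint problem I would split the half‑line at the first vertex distance $t_1$. On $(0,t_1)$ one has $g_0\equiv1$, so the piece near the root is the standard half‑line Hardy inequality with Dirichlet condition at $0$, whose Muckenhoupt constant is precisely $B_2(v)$; on $(t_1,\infty)$ the decay at infinity plays the role of a Dirichlet condition at $+\infty$, and the dual Muckenhoupt condition gives $B_1(v)$. Estimating the two ranges separately and adding yields $C_V\le 4(B_1(v)+B_2(v))$. The lower bound $B_1(v)\le C_V$ comes from testing with the $(t_1,\infty)$ extremizers, while $B_2(v)\le(1+b_1t_1/(t_2-t_1))C_V$ follows by taking a $(0,t_1)$ extremizer and continuing it linearly from its value at $t_1$ down to $0$ across $(t_1,t_2)$, where $g_0=b_1$; the extra Dirichlet cost of this continuation produces the junction factor.

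I expect the principal obstacle to be the correct identification of the admissible class at infinity in the transient case, i.e.\ verifying that $\CH^\circ(\G)$ consists exactly of the decaying functions, so that the two‑endpoint rather than the one‑endpoint Muckenhoupt condition is the relevant one; this is precisely the point at which the reduced height $l(\G)$ and the transiency criterion \eqref{trans} enter. The remaining work---the two Muckenhoupt estimates and the bookkeeping of the constants, in particular the junction factor $1+b_1t_1/(t_2-t_1)$ in the lower bound for $B_2(v)$---is classical and routine once the splitting at $t_1$ is set up.
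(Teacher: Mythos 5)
Your overall route is the one the paper intends: Proposition \ref{prhardy} is quoted from \cite{NS1} (Theorem 5.2) and rests, exactly as you propose, on the orthogonal decomposition of both quadratic forms into the radial part and the sectors attached to vertices, the reduction to a one-dimensional weighted Hardy inequality on $(0,\infty)$ with weight $g_0$, and the classical Muckenhoupt criteria. Your part 1$^\circ$, your identification of the admissible radial class in the transient case (the two-endpoint conditions $f(0)=f(\infty)=0$), the upper bound $C_V\le 4(B_1(v)+B_2(v))$, and your derivation of $B_2(v)\le\bigl(1+b_1t_1/(t_2-t_1)\bigr)C_V$ with the junction factor are all correct.

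The gap is the lower bound $B_1(v)\le C_V$, which you dispose of by ``testing with the $(t_1,\infty)$ extremizers'' and declare routine. Those extremizers are not admissible: the near-extremal functions for the dual Muckenhoupt constant $B_1(v)$ are constant, equal to $h_0(r):=\int_r^\infty ds/g_0(s)$, on all of $(t_1,r)$, so they do not vanish at $t_1$, and to put them into $\CH^\circ(\G)$ you must bring them down to zero at the root. This continuation across $(0,t_1)$, where $g_0\equiv 1$, costs at least $h_0(r)^2/t_1$ of extra Dirichlet energy --- precisely the junction phenomenon you correctly account for in the $B_2$ bound. Carrying your construction through honestly yields only
\begin{equation*}
B_1(v)\le\left(1+\frac{1}{t_1}\int_{t_1}^\infty\frac{ds}{g_0(s)}\right)C_V=\frac{l(\G)}{t_1}\,C_V,
\end{equation*}
not $B_1(v)\le C_V$.

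Moreover, the loss is not an artifact of a clumsy test function, so the step cannot be repaired by a better choice. Let $v$ be an approximate point mass of total mass $\lambda$ concentrated just above $t_1$. Then $B_1(v)\to\lambda b_1 h_0(t_1)$, while the Hardy constant tends to $\lambda b_1\bigl(1/t_1+1/h_0(t_1)\bigr)^{-1}$: in the limit the form $\int_\G V|u|^2dx$ becomes $\lambda b_1|u(v_1)|^2$ at the first vertex $v_1$, the sectors contribute nothing, and the minimal energy of a function in $\CH^\circ(\G)$ equal to $1$ at $v_1$ is the capacity $1/t_1+1/h_0(t_1)$ (linear rise on the root edge plus harmonic decay to infinity). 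Hence $B_1(v)/C_V\to l(\G)/t_1>1$, and the clean inequality $B_1(v)\le C_V$ fails for such potentials: the two-endpoint best constant is genuinely smaller than the one-endpoint Muckenhoupt constant. So either the stated inequality must carry the factor $l(\G)/t_1$ (the analogue of the factor you found for $B_2$), or an argument of a different nature is required; in either case this is exactly the point where your proof, as written, breaks down, and it is the one step you labelled classical and routine.
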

\section{Classes of functions, of number sequences, and of compact
operators}\label{clas} As we shall see, the sharp estimates of the
function $N_-(\BH_{\a V})$ involve the {\it weak $L_p$-spaces} on
the tree, rather than the classical Lebesgue spaces $L_p$. Here we recall
their definitions. See, e.g., \cite{BL}, Section 1.3, for more
detail.

Let $(\CX,\mu)$ be a measure space with $\s$-finite measure. A
measurable function $f$ on $\CX$ belongs to the class
$L_{p,w}(\CX,\mu),\ 0<p<\infty$, if and only if
\begin{equation}\label{weak}
\|f\|_{L_{p,w}(\CX,\mu)}:=\sup_{t>0}\left(
t\cdot\left(\mu\{x\in\CX:|f(x)|>t\}\right)^{1/p}\right)<\infty.
\end{equation}
 The spaces $L_{p,w}$ are linear. The functional
\eqref{weak} defines a {\it quasi-norm} on $L_{p,w}$, and the
space is complete with respect to this quasi-norm. If $p>1$, and
only in this case, a norm equivalent to the quasi-norm
\eqref{weak} does exist. However, in various estimates it is more
convenient to use the quasi-norm \eqref{weak}.

If the measure $\mu$ does not reduce to the sum of a finite number
of atoms, the space $L_{p,w}(\CX,\mu)$ is non-separable. The
condition
\[ \mu\{x\in\CX:|f(x)|>t\}=o(t^{-p})\qquad {\text{as}}\ t+t^{-1}\to \infty\]
singles out a separable subspace in $L_{p,w}(\CX,\mu)$, which we
denote by $L_{p,w}^\circ(\CX,\mu)$. It is well-known (and easy to
check) that
\begin{equation*}
  \|f\|^p_{L_{p,w}(\CX,\mu)}\le \int_\CX|f|^p d\mu;\qquad
   L_{p}(\CX,\mu)\subset L_{p,w}^{\circ}(\CX,\mu).
\end{equation*}\vs

In particular, let $\CX=\G$ be a metric tree, and let $\mu$ be
the measure generated by a weight function $\Phi(x)\ge 0$ on $\G$:
\[ d\mu=\Phi(x)dx.\]
We denote the corresponding space by $L_{p,w}(\G,\Phi)$ and the
functional \eqref{weak} by $\|f\|_{p,w;\Phi}$. So,
\begin{equation*}
    \|f\|_{p,w;\Phi}^p=\sup_{t>0}\left(t^p\int\limits_{|f(x)|>t}
    \Phi(x)dx\right).
\end{equation*}

We also need the weak $\ell_p$-spaces of number sequences. These
are a particular case of the spaces $L_{p,w}(\CX,\mu)$, where
$\CX=\N$, or $\CX=\CN_0:=\{0,1,\ldots\}$. The measure $\mu$ is
either the standard counting measure, or it is given by a sequence
of positive weights $\Phi_n=\mu(\{n\})$. For a sequence
$\bsymb{f}=\{f_n\}_{n\in\CX}$ we have
\begin{equation*}
    \|\bsymb{f}\|_{p,w;\Phi_n}^p=\sup_{t>0}\left(t^p
    \sum\limits_{|f(n)|>t}\Phi_n\right).
\end{equation*}

The case of counting measure is especially simple; here we drop
the symbol $\Phi_n$ in the notation. Consider the sequence
$\{f^*_n\}$ whose terms are the numbers $|f_n|$, rearranged
in the non-increasing order. Then $\bsymb{f}\in \ell_{p,w}$ if and
only if $f^*_n=O(n^{-1/p})$, and $\bsymb{f}\in \ell_{p,w}^\circ $
if and only if $f^*_n=o(n^{-1/p})$. Moreover,
\[ \|\bsymb{f}\|_{p,w}=\sup_n(n^{1/p}f_n^*).\]
\vs

Now, let $\BA$ be a compact operator in a Hilbert space $\GH$, and
let $\{s_n(\BA)\}$ be the sequence of its singular numbers, counted according to
their multiplicities. Recall that for a non-negative self-adjoint
operator its singular numbers coincide with the eigenvalues. By
definition, $\BA$ belongs to the weak Schatten class $\CC_{p,w}$,
if $\{s_n(\BA)\}\in\ell_{p,w}$, and
\[ \|\BA\|_{p,w}:=\|\{s_n(\BA)\}\|_{p,w}=\sup_n
\left(n^{1/p}s_n(\BA)\right).\] The class $\CC^\circ_{p,w}$ is
defined as the set of all compact operators such that
$s_n(\BA)=o(n^{-1/p})$.

For a compact operator $\BA$ we denote
\[ n(s;\BA)=\#\{n:s_n(\BA)>s\},\qquad s>0.\]
 Evidently,
the inclusion $\BA\in \CC^p_w$ is equivalent to the inequality
\[n(s; \BA)\le C n^{-p},\qquad\forall n\in\N,\]
and moreover, the best possible constant here is
$C=\|\BA\|_{p,w}^p.$ Also, $\BA\in \CC^\circ_{p,w}$ if and only if
$n(s; \BA)=o(n^{-p})$.
\section{Main results on the behavior of $N_-(\BH_{\a
V})$}\label{main} Here we present the basic estimates for the
number of negative eigenvalues of the operator $\BH_{\a V}$. Their
proofs are given in Section \ref{prfs}.

\begin{thm}\label{genthm}
Let $\G$ be a regular metric tree and let a normalized and
symmetric Hardy weight $\Psi(x)=\psi(|x|)>0$ on $\CH^\circ(\G)$ be
chosen. Define the weight function
    $\Phi(x)=|x|\Psi(x)$.
Suppose $V$ is a non-negative potential on $\G$ such that
$V\Psi^{-1}\in L_{p,w}(\G,\Phi)$ for some $p>1$. Then
\begin{equation}\label{bas}
    N_-(\BH_{\a V})
    \le C\a^p\sup_{t>0}
    \left(t^p\int\limits_{V(x)>t\Psi(x)}\Phi(x)dx\right),
    \qquad\forall\a>0,
\end{equation}
and
\begin{equation}\label{lsup}
    \limsup_{\a\to \infty}\a^{-p}N_-(\BH_{\a V})
    \le C\limsup_{t+t^{-1}\to\infty}
    \left(t^p\int\limits_{V(x)>t\Psi(x)}\Phi(x)dx\right).
\end{equation}
In particular, if
    $\int\limits_{V(x)>t\Psi(x)}\Phi(x)dx=o(t^{-p})$ as
    $t+t^{-1}\to\infty$, then
\begin{equation}\label{osmall}
 N_-(\BH_{\a V})=o(\a^p).
\end{equation}

If $V\Psi^{-1}$ belongs to the narrower space $L_{p}(\G,\Phi)$,
then
\begin{equation}\label{narr}
    N_-(\BH_{\a V})\le C\a^p\int_\G|x|V^p\Psi^{1-p}dx,
    \end{equation}
and \eqref{osmall} is satisfied.
\end{thm}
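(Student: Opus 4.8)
The plan is to deduce everything from the Birman--Schwinger principle together with the singular-number estimates of \cite{NS}. First I would set up the reduction announced in Section~\ref{prfs}. Because $V\ge0$ is a Hardy weight, the form $\int_\G V|u|^2dx$ is bounded relative to $\|u'\|^2$ on $\CH^\circ(\G)$, so by the Riesz representation it defines a bounded, nonnegative, self-adjoint operator $\BB_V$ on $\CH^\circ(\G)$ via $\langle\BB_V u,\phi\rangle=\int_\G Vu\bar\phi\,dx$, where $\langle\cdot,\cdot\rangle$ is the inner product $\int_\G u'\bar\phi'\,dx$. The Birman--Schwinger principle then gives
\begin{equation*}
  N_-(\BH_{\a V})=n(\a^{-1};\BB_V)=\#\{n:s_n(\BB_V)>\a^{-1}\},
\end{equation*}
so the whole theorem becomes a statement about the location of the singular numbers $s_n(\BB_V)$. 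To bring in the Hardy weight I would factor $V=W\Psi$ with $W=V\Psi^{-1}$; writing $\BB_V=T^*T$ with $T\colon\CH^\circ(\G)\to L_2(\G)$, $Tu=V^{1/2}u$, one has $s_n(\BB_V)=s_n(T)^2$, and the normalization of $\Psi$ (the Hardy inequality with constant $1$) is exactly what makes $T$ bounded. It is this embedding operator, with its size measured through $W$ against the measure $\Phi\,dx$, that is analyzed in \cite{NS}.

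Next I would invoke the main singular-number estimate of \cite{NS}. For a regular tree one decomposes $\CH^\circ(\G)$ into the radial (symmetric) subspace and its orthogonal complement; on each piece the problem is unitarily equivalent to a weighted half-line problem, whose Dirichlet Green's function is $\min(|x|,|y|)$. It is precisely this factor that produces the weight $\Phi(x)=|x|\Psi(x)$: the diagonal of the Green's function contributes the factor $|x|$, while $\Psi$ records the normalization of the Hardy inequality. Summing the contributions of the pieces, the estimate of \cite{NS} yields $\BB_V\in\CC_{p,w}$ for $p>1$ together with the quasi-norm bound
\begin{equation*}
  \|\BB_V\|_{p,w}^p\le C\,\|W\|_{p,w;\Phi}^p
  =C\sup_{t>0}\Bigl(t^p\int\limits_{V(x)>t\Psi(x)}\Phi(x)\,dx\Bigr).
\end{equation*}
Combining this with $n(s;\BB_V)\le\|\BB_V\|_{p,w}^p\,s^{-p}$ at $s=\a^{-1}$ gives \eqref{bas} at once.

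Finally I would treat the refinements. The estimate \eqref{lsup} requires the asymptotic, rather than the uniform, version: one splits $V=V'+V''$, where $V'$ is bounded with bounded support and $V''$ has small weak quasi-norm. Any such $V'$ gives $N_-(\BH_{\a V'})=O(\a^{1/2})=o(\a^p)$ (local dimension one and $p>1/2$), while the subadditivity $n(s_1+s_2;\BB_{V'}+\BB_{V''})\le n(s_1;\BB_{V'})+n(s_2;\BB_{V''})$, together with the bound of the second paragraph for $V''$, lets $\|V''\Psi^{-1}\|_{p,w;\Phi}$ approach the right-hand side of \eqref{lsup}; the case \eqref{osmall} is the special one in which this $\limsup$ vanishes, i.e. $W\in L_{p,w}^\circ(\G,\Phi)$. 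The strong bound \eqref{narr} follows from \eqref{bas} and the elementary inequality $\|W\|_{p,w;\Phi}^p\le\int_\G|W|^p\Phi\,dx=\int_\G|x|V^p\Psi^{1-p}\,dx$, and the accompanying \eqref{osmall} is then the inclusion $L_p(\G,\Phi)\subset L_{p,w}^\circ(\G,\Phi)$. I expect the main obstacle to be the second paragraph: verifying that the tree decomposition of \cite{NS} reproduces exactly the weight $\Phi=|x|\Psi$ and the matching quasi-norm bound, and organizing the perturbation bookkeeping in \eqref{lsup} so that both boundary regimes $t\to0$ and $t\to\infty$ in $t+t^{-1}\to\infty$ are absorbed.
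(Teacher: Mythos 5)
Your proposal follows essentially the same route as the paper's proof: the Birman--Schwinger reduction to the operator $\BA^\circ_V$ on $\CH^\circ(\G)$ (Proposition~\ref{l-BSCH}), importing the weak-Schatten quasi-norm bound from Theorem 3.4 of \cite{NS} to obtain \eqref{bas}, and a splitting into a nice compactly supported part plus a remainder of small weak quasi-norm to pass to the $\limsup$ statement \eqref{lsup} (the paper carries out exactly this bookkeeping for \eqref{lsup3} and obtains \eqref{lsup} by analogy, with reference to \cite{BS}). The only minor deviation is that you derive \eqref{narr} from \eqref{bas} via the Chebyshev-type inequality $\|V\Psi^{-1}\|^p_{p,w;\Phi}\le\int_\G|x|V^p\Psi^{1-p}dx$, whereas the paper cites the strong $\CC_p$ bound (3.13) of \cite{NS}; both are sound.
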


\subsection{Global dimension of a regular tree}\label{regtr}
Suppose that the branching function $g_0(t)$
satisfies the two-sided inequality
\begin{equation*}
    c_1(1+t)^{d-1}\le g_0(t)\le c_2(1+t)^{d-1},\ 0<c_1<c_2,\qquad
    \forall t>0,
\end{equation*}
with some $d\ge 1$. Following \cite{EFK}, we shall say that the
number $d$ is the {\it global dimension} of the regular tree $\G$.
This is an important invariant of a tree. The existence of a
global dimension is itself a serious restriction on the structure
of the tree.

It is more convenient for us to use an equivalent definition of
global dimension:
\begin{equation}\label{dimtree1}
    c'_1t^{d-1}\le g_0(t)\le c'_2t^{d-1},\ 0<c'_1\le c'_2,\qquad
    \forall t>t_1.
\end{equation}
A direct inspection shows that the function $|x|^{-2}$ is a
symmetric Hardy weight on $\CH^\circ(\G)$, provided that $\G$ has
global dimension $d\neq 2$. Taking $\Psi(x)=c|x|^{-2}$ with an
appropriate $c$ (and hence, $\Phi(x)=c|x|^{-1}$) and applying \thmref{genthm}, we come to the
following result.

\begin{thm}\label{special}
Let $\G$ be a regular metric tree of global dimension $d\neq 2$.
Suppose the potential $V\ge 0$ is such that $V|x|^2\in
L_{p,w}(\G,|x|^{-1})$ for some $p>1$. Then
\begin{equation}\label{bas1}
      N_-(\BH_{\a V})\le C\a^p\sup_{t>0}
    \left(t^p\int\limits_{V(x)|x|^2>t}|x|^{-1}dx\right)
\end{equation}
and
\begin{equation}\label{lsup1}
    \limsup_{\a\to \infty}\a^{-p}N_-(\BH_{\a V})
    \le C\limsup_{t+t^{-1}\to\infty}
    \left(t^p\int\limits_{V(x)|x|^2>t}|x|^{-1}dx\right).
\end{equation}
In particular, if
    $\int\limits_{V(x)|x|^2>t}|x|^{-1}dx=o(t^{-p})$ as
    $t+t^{-1}\to\infty$, then \eqref{osmall} is satisfied.
If $V|x|^2\in L_{p}(\G,|x|^{-1})$, then along with \eqref{osmall} we have
\begin{equation}\label{narr1}
    N_-(\BH_{\a V})\le C\a^p\int_\G V^p|x|^{2p-1}dx.
\end{equation}
\end{thm}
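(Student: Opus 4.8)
The plan is to deduce \thmref{special} as a direct specialization of \thmref{genthm}, applied with the concrete choice $\Psi(x)=c|x|^{-2}$. The first task is to justify that, for a regular tree of global dimension $d\neq 2$, the function $|x|^{-2}$ is a symmetric Hardy weight on $\CH^\circ(\G)$ and that $c>0$ can be fixed so that this weight is normalized. This is the only step that genuinely uses the hypothesis $d\neq 2$, and it is where the case distinction enters: by \eqref{trans} together with the bound \eqref{dimtree1}, the tree is recurrent when $d<2$ and transient when $d>2$, so I would verify the relevant condition of Proposition~\ref{prhardy} separately. In the recurrent case one checks $B_0(c\,s^{-2})<\infty$; in the transient case one checks both $B_1(c\,s^{-2})<\infty$ and $B_2(c\,s^{-2})<\infty$. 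Inserting $g_0(s)\asymp s^{d-1}$ from \eqref{dimtree1} reduces each of $B_0$ and $B_1$ to elementary power integrals: the product $v(s)g_0(s)\asymp c\,s^{d-3}$ and the density $1/g_0(s)\asymp s^{1-d}$ integrate to powers with exponent $\pm(d-2)$, which is nonzero exactly because $d\neq 2$, so the relevant integrals converge and their products stay bounded in $t$. The term $B_2$, supported near the root where $g_0\equiv 1$, involves only $v(s)=c\,s^{-2}$ and is bounded by $c$ directly. By the two-sided comparison of Proposition~\ref{prhardy} between $C_V$ and $B_0$ (resp.\ $B_1+B_2$), the least Hardy constant is finite, and a single rescaling of $c$ normalizes it to $1$.

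With $\Psi(x)=c|x|^{-2}$ fixed, the weight of \thmref{genthm} is $\Phi(x)=|x|\Psi(x)=c|x|^{-1}$, and it remains to substitute these into the conclusions. The hypothesis $V\Psi^{-1}\in L_{p,w}(\G,\Phi)$ becomes $c^{-1}V|x|^2\in L_{p,w}(\G,c|x|^{-1})$, which up to the fixed factor $c$ is precisely the assumption $V|x|^2\in L_{p,w}(\G,|x|^{-1})$ of \thmref{special}. For the principal estimate, the superlevel set $\{V(x)>t\Psi(x)\}$ coincides with $\{V(x)|x|^2>ct\}$ and $\Phi(x)\,dx=c|x|^{-1}dx$, so \eqref{bas} reads
\[
N_-(\BH_{\a V})\le C\a^p\sup_{t>0}\left(t^p\int_{V(x)|x|^2>ct}c|x|^{-1}\,dx\right).
\]
Replacing the supremum variable $t$ by $t/c$ absorbs every power of $c$ into the generic constant $C$ and yields \eqref{bas1}. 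The identical substitution converts \eqref{lsup} into \eqref{lsup1}, and in \eqref{narr} one has $|x|V^p\Psi^{1-p}=c^{1-p}V^p|x|^{2p-1}$, so that \eqref{narr} becomes \eqref{narr1} after the constant $c^{1-p}$ is absorbed into $C$. The small-$o$ conclusion \eqref{osmall} transfers verbatim, since the $o(t^{-p})$ condition defining it is invariant under the same rescaling $t\mapsto t/c$.

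The only nontrivial ingredient is the Hardy-weight verification of the first paragraph; everything afterwards is bookkeeping of the constant $c$. Accordingly, I expect the main obstacle to be organizing the recurrent and transient cases uniformly and confirming that the borderline divergence of the governing integrals occurs exactly at $d=2$---which is precisely why that value is excluded and why $|x|^{-2}$ is the natural Hardy weight for this class of trees. Once the weight $c|x|^{-2}$ is known to be normalized and Hardy, \thmref{special} follows immediately from \thmref{genthm}.
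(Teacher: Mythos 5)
Your proposal is correct and follows essentially the same route as the paper: \thmref{special} is obtained by specializing \thmref{genthm} to $\Psi(x)=c|x|^{-2}$, the paper dismissing the Hardy-weight verification as a ``direct inspection'' which you carry out explicitly via Proposition~\ref{prhardy} in the recurrent ($d<2$) and transient ($d>2$) cases. The substitution and constant bookkeeping you perform is exactly what the paper leaves implicit, so there is nothing to add.
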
\vs
Any metric tree has local dimension one, and hence, for the fast
decaying potentials one must have $N_-(\BH_{\a V})=O(\a^{1/2})$.
At the same time, both Theorems \ref{genthm} and \ref{special}
describe only the situation where $N_-(\BH_{\a V})=O(\a^p)$ with
$p>1$, which corresponds to the potentials that decay rather
slowly. The next result concerns the case $1/2<p\le 1$. On the
other hand, it applies only to the symmetric potentials
$V(x)=v(|x|)$. We also suppose $V\in L_1(\G)$. Actually, the
latter is a restriction on the behavior of $V$ only near the root.
It is necessary in case of the Neumann boundary condition, see
Section \ref{neum}, but not in case of the Dirichlet condition.

The properties of the potential in the next two theorems are
expressed in terms different from those in Theorems \ref{genthm}
and \ref{special}. With any symmetric potential $V(x)=v(|x|)\ge 0$
from $L_1(\G)$ we associate the sequence
$\bsymb\y(V)=\{\y_n(V)\}$, where
\begin{equation*}
\y_n(V)=t_{n+1}\int_{t_n}^{t_{n+1}}v(t)dt, \qquad n=0,1,\ldots
\end{equation*}

\begin{thm}\label{th:p} Let $V\in L_1(\G)$ be a symmetric, non-negative
potential on a regular tree $\G$.

1$^\circ$ Suppose the sequence  $\bsymb\y(V)=\{\y_n(V)\}$ belongs to the space $\ell_{1/2}$
with respect to the weight sequence $\{g_0(t_{n+1})\}$. Then the estimate
\begin{equation}\label{1/2}
N_-(\BH_{\a V})\le C\a^{1/2}\sum_n \y^{1/2}_n(V)g_0(t_{n+1})
\end{equation}
and the Weyl type asymptotic
formula
\begin{equation}\label{w}
    \lim_{\a\to\infty}\a^{-1/2}N_-(\BH_{\a V})=\frac1{\pi}\int_\G
    V^{1/2}(x)dx=\frac1{\pi}\int_0^\infty v^{1/2}(t)g_0(t)dt.
\end{equation}
are satisfied.

2$^\circ$  Suppose
 $\bsymb\y(V)\in\ell_{p,w}(\N_0;g_0(t_{n+1}))$ for some
 $p\in(1/2,1 )$. Then
\begin{equation}\label{<1}
     N_-(\BH_{\a V})\le
C\a^p\sup_{t>0}\left(t^p\sum_{\y_n(V)>t}g_0(t_{n+1})\right)
\end{equation}
and
\begin{equation}\label{lsup3}
    \limsup_{\a\to \infty}\a^{-p}N_-(\BH_{\a V})
    \le C\limsup_{t\to 0}
    \left(t^p\sum_{\y_n(V)>t}g_0(t_{n+1})\right).
\end{equation}
In particular, if
    $\sum_{\y_n(V)>t}g_0(t_{n+1})=o(t^{-p})$ as $t\to 0$, then \eqref{osmall}
is satisfied.

If $\bsymb\y(V)\in\ell_p(\N_0;g_0(t_{n+1}))$ for some $p\in
(1/2,1]$, then along with \eqref{osmall} we have
\begin{equation}\label{str<1}
    N_-(\BH_{\a V})\le C\a^p\sum_{n=0}^\infty\y^p_n(V)g_0(t_{n+1}).
\end{equation}
\end{thm}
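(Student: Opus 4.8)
The plan is to reduce the problem to a family of one-dimensional weighted Schr\"odinger operators and then to invoke the spectral estimates of \cite{NS}, the only genuinely new ingredient being the passage to the sequence $\bsymb\y(V)$. By the Birman--Schwinger principle, $N_-(\BH_{\a V})=n(\a^{-1};\BB_V)$, where $\BB_V$ is the compact, non-negative Birman--Schwinger operator associated with the form \eqref{shrqf}. Since $V(x)=v(|x|)$ is symmetric and $\G$ is regular, the symmetry group of $\G$ acts on $\CH^\circ(\G)$; the orthogonal decomposition of this space into the radial subspace and its complement, performed in \cite{NS,NS1}, splits $\BB_V$ into an orthogonal sum of one-dimensional Dirichlet Schr\"odinger operators on half-lines, indexed by the generations $n$, each acting in $L_2$ with weight $g_0$ and occurring with the multiplicity dictated by the branching numbers. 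On the shell $t_n<t\le t_{n+1}$ the weight $g_0$ is constant and equals $g_0(t_{n+1})$ by \eqref{g}. The whole estimate is thereby reduced to the spectral analysis of the weighted operator $-\tfrac1{g_0}(g_0 f')'-\a v$ on $(0,\infty)$, which is precisely the object studied in \cite{NS}.

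With the reduction in hand, I would quote from \cite{NS} the bounds for the counting function of the weighted half-line operator; these are controlled by the strengths of the potential on the successive shells, each weighted by $g_0(t_{n+1})$. The remaining, essentially elementary, task is to match these strengths with the numbers $\y_n(V)$. For part $1^\circ$ the relevant inequality is the Cauchy--Schwarz estimate
\[
\int_{t_n}^{t_{n+1}} v^{1/2}\,dt\le (t_{n+1}-t_n)^{1/2}\Bigl(\int_{t_n}^{t_{n+1}} v\,dt\Bigr)^{1/2}\le \y_n^{1/2}(V),
\]
which converts the one-dimensional bound on the $n$-th shell into the summand $\y_n^{1/2}(V)\,g_0(t_{n+1})$ of \eqref{1/2}. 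For part $2^\circ$ the per-shell strength is comparable to $\y_n(V)$, and \eqref{<1}, \eqref{lsup3} and \eqref{str<1} emerge as the weak and strong weighted $\ell_p$-(quasi-)norms of $\bsymb\y(V)$ with weight $g_0(t_{n+1})$, in the notation of Section~\ref{clas}; the endpoint $p=1$ of \eqref{str<1} is included here.

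The Weyl-type formula \eqref{w} follows from the one-dimensional Weyl law for $-\tfrac1{g_0}(g_0 f')'-\a v$, whose leading term carries the density of states $1/\pi$ together with the volume factor $g_0$, giving $\tfrac1\pi\int_0^\infty v^{1/2}(t)g_0(t)\,dt$. The hypothesis $\bsymb\y(V)\in\ell_{1/2}(\{g_0(t_{n+1})\})$ guarantees, again via the Cauchy--Schwarz estimate above, the convergence of this integral, and the assumption $V\in L_1(\G)$ controls $\y_0(V)$ and the contribution near the root.

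I expect the main obstacle to be the bookkeeping of the decomposition behind \eqref{w}: one must check that the operators attached to the successive generations, counted with the multiplicities dictated by the branching numbers, reassemble into the single volume integral $\tfrac1\pi\int_0^\infty v^{1/2}g_0\,dt$, with no lower-order modes contaminating the leading term. A secondary difficulty lies in the weak-space statement \eqref{lsup3} and the improvement \eqref{osmall}: they require the per-shell estimate to depend on the potential only, and monotonically, through the distribution function $\sum_{\y_n(V)>t}g_0(t_{n+1})$, so that restricting to the separable subspace $\ell_{p,w}^\circ(\N_0;g_0(t_{n+1}))$ upgrades `$O$' to `$o$', by the same mechanism already in force for \thmref{genthm}.
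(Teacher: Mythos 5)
Your high-level route---Birman--Schwinger reduction (Proposition \ref{l-BSCH}, \eqref{pr:1}) followed by the spectral estimates of \cite{NS}---is the same as the paper's; the paper simply quotes Theorem 4.1 of \cite{NS} wholesale (adding only the remark that $\|\BA_V^\circ\|_\CC\le\|\BA_V\|_\CC$, since \cite{NS} works on $\CH^{1,0}(\G)$ rather than $\CH^\circ(\G)$) instead of re-deriving the symmetry decomposition. The genuine gap is \eqref{lsup3}. It does not ``emerge'' from \cite{NS} alongside \eqref{<1}: the weak-norm bound \eqref{<1} controls $\sup_{t>0}$ of the distribution function, whereas \eqref{lsup3} asserts that the strong-coupling asymptotics is governed by $\limsup_{t\to0}$ alone, i.e.\ that finitely many large shells cannot affect the leading order. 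This is precisely the one assertion of \thmref{th:p} for which the paper gives a detailed, self-contained argument: choose a finite index set $E_\e$ so that deleting it brings the sup within $\e$ of the limsup; split $V=V_\e+V^\e$ with $V_\e$ supported on the bounded set $\cup_{n\in E_\e}(t_n,t_{n+1}]$; by part 1$^\circ$, $n(s;\BA_{V_\e}^\circ)=O(s^{-1/2})=o(s^{-p})$, so this piece drops out of $\limsup_{s\to0}s^p\,n(s;\BA_V^\circ)$; then apply \eqref{<1} to $V^\e$ and let $\e\to0$. Your substitute---``the same mechanism already in force for \thmref{genthm}''---is circular: in the paper, the limsup inequality \eqref{lsup} of \thmref{genthm} is the one said to be reconstructible ``by analogy'' with the detailed proof of \eqref{lsup3}, and your proposal never spells that mechanism out anywhere.

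A second, more localized flaw: your Cauchy--Schwarz step for \eqref{1/2} runs the wrong way. It presupposes a per-shell one-dimensional bound of the form $N_-\lesssim\a^{1/2}\sum_n g_0(t_{n+1})\int_{t_n}^{t_{n+1}}v^{1/2}dt$, i.e.\ a Calogero/semiclassical-type estimate for the counting function. No such estimate holds for general non-monotone $v$: a sparse sequence of tall narrow bumps can have $\int v^{1/2}\,dt$ arbitrarily small while producing arbitrarily many bound states (each bump at distance $x_k$ with $x_k\int_{\rm bump}v\,dt\gtrsim1$ binds a state). The bounds that \cite{NS} actually provides (Theorem 4.1, resting on the Birman--Solomyak one-dimensional estimates) are stated directly in terms of the quantities $\y_n(V)$, so no conversion is needed; Cauchy--Schwarz enters only in the opposite, harmless direction, namely to show that the hypothesis $\sum_n\y_n^{1/2}(V)g_0(t_{n+1})<\infty$ makes the Weyl coefficient $\int_0^\infty v^{1/2}g_0\,dt$ in \eqref{w} finite.
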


\subsection{Estimates for the $b$-regular trees}\label{breg} More
advanced results can be obtained if we suppose that for all the
vertices $v\neq o$ the branching number is the same
integer $b>1$. In other words, we suppose that
\[ b_n=b,\qquad \forall n>0.\]
In the paper \cite{NS}, Section 7.2, such trees were called {\it
$b$-regular}. For a $b$-regular tree one has
\begin{equation}\label{b-reg}
g_0(t)=b^n,\qquad t_n< t\le t_{n+1}.
\end{equation}
If, besides, $\G$ has global dimension $d$, then \eqref{b-reg},
together with \eqref{dimtree1}, implies (as $t\to t_n+$) that
\begin{equation}\label{reg1}
    c'_1t_n^{d-1}\le b^n< c'_2t_n^{d-1}.
\end{equation}
Hence, the sequence $\{t_n\}$ grows as a geometric progression.

For the $b$-regular trees the result of
\thmref{th:p} can be considerably strengthened: it extends to arbitrary
values of $p>1/2$ and, most importantly, the condition
\[\bsymb\y(V)\in\ell_{p,w}(\N_0,g_0(t_{n+1}))=\ell_{p,w}(\N_0,b^n)\]
turns out to be not only sufficient, but also necessary for
 $N_-(\BH_{\a V})=O(\a^p)$.
\begin{thm}\label{th:pb}
Let $V\in L_1(\G)$ be a symmetric, non-negative potential on a
$b$-regular tree $\G$. Suppose also that $\G$ has global dimension
$d\neq 2$.

1$^\circ$ Suppose
 $\bsymb\y(V)\in\ell_{p,w}(\N_0;b^n)$ for some $p>1/2$. Then
\begin{equation}\label{p<1}
     N_-(\BH_{\a V})\le
C\a^p\sup_{t>0}\left(t^p\sum_{\y_n(V)>t}b^n\right)
\end{equation}
and
\begin{equation}\label{p<1bis}
   \limsup_{\a\to\infty}\a^{-p}  N_-(\BH_{\a V})\le
C\limsup_{t\to 0}\left(t^p\sum_{\y_n(V)>t}b^n\right).
\end{equation}
In particular, if $\bsymb\y(V)\in\ell_{p,w}^\circ(\N_0,b^n))$, then
\eqref{osmall} is satisfied.

 If $\bsymb\y(V)\in\ell_p(\N_0,b^n)$, then along with \eqref{osmall}
we have
\begin{equation}\label{strong<1}
    N_-(\BH_{\a V})\le
C\a^p\sum_{n=0}^\infty\y^p_n(V)b^n.
\end{equation}

\vs 2$^\circ$ Conversely, if $N_-(\BH_{\a V})=O(\a^p)$ with some
$p\ge 1/2$, then $\bsymb\y(V)\in \ell_{p,w}(\N_0;b^n)$. If
$N_-(\BH_{\a V})=o(\a^p)$ with some $p> 1/2$, then
$\bsymb\y(V)\in \ell_{p,w}^\circ(\N_0;b^n)$.
\end{thm}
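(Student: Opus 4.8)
The plan is to reduce everything, via the Birman--Schwinger principle and the angular decomposition of \cite{NS}, to a family of weighted half-line problems, and then to read off both directions from the two-sided Hardy inequality of Proposition~\ref{prhardy}. For a symmetric potential on a $b$-regular tree the form $\bh_{\a V}$ decomposes as an orthogonal sum over the angular modes of $\G$: the radial mode, together with, for each $j\ge1$, exactly $(b-1)b^{j-1}$ modes ``emerging'' at generation $j$ (one checks $1+\sum_{j=1}^{n}(b-1)b^{j-1}=b^n$, matching the number of edges at level $n$). A mode emerging at generation $j$ satisfies a Dirichlet condition at $t_j$ and reduces to a half-line Schr\"odinger operator on $(t_j,\infty)$ with weight $G_j(t)=g_0(t)/b^{\,j}$. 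Thus $N_-(\BH_{\a V})$ is the sum over all modes of the negative-eigenvalue counts of these one-dimensional operators, each controlled by its Hardy constant through Proposition~\ref{prhardy}.

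For Part~1$^\circ$ I would apply the half-line estimates of \cite{NS} term by term. The point is that \eqref{reg1} forces $t_n$ to grow geometrically, $t_n\asymp\mu^n$ with $\mu=b^{1/(d-1)}\ne b$ (here $d\ne2$ enters), so all length scales at a given level are comparable and the constants in the one-dimensional bounds are uniform in $n$. This uniformity is exactly what is missing for a general regular tree, and what confines \thmref{th:p} to $p\le1$; once it is available, the estimates of \cite{NS} hold for every $p>1/2$. Summing the modal contributions and using $g_0(t_{n+1})=b^n$ converts the hypothesis $\bsymb\y(V)\in\ell_{p,w}(\N_0;b^n)$ into membership of the Birman--Schwinger operator in $\CC_{p,w}$, which yields \eqref{p<1}; passing to the leading term as $\a\to\infty$ gives \eqref{p<1bis} and the $o$-statement, while the strong $\ell_p$-bound gives \eqref{strong<1}.

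Part~2$^\circ$ is the heart of the matter, and my plan rests on a single-shell lower bound: there are constants $c_0,c_1>0$ so that $\a\,\y_n(V)\ge c_0$ implies $N_-(\BH_{\a V})\ge c_1 b^n$, for every $n$. To prove it, first drop $v$ outside the shell $(t_n,t_{n+1})$; since the potential only decreases, this can only decrease $N_-$. Now examine a mode emerging at a generation $j$ with $n-j$ larger than a fixed constant. The crucial cancellation is that the spread factor $G_j(t_n)=b^{\,n-j}$ multiplying the potential is exactly compensated by the kinetic cost of reaching the shell from $t_j$, namely $\bigl(\int_{t_j}^{t_n}G_j^{-1}\bigr)^{-1}$; because $n-j$ is bounded below, the effective distance from $t_j$ to the shell is $\asymp t_{n+1}$ \emph{uniformly over the position of the mass within the shell}. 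Consequently the modal Hardy constant is $\asymp t_{n+1}\int_{t_n}^{t_{n+1}}v=\y_n(V)$ for all these $\asymp b^n$ modes simultaneously, and each contributes a negative eigenvalue as soon as $\a\,\y_n(V)\gtrsim1$. This is precisely why $\y_n$ carries the weight $t_{n+1}$ rather than the distance to $t_j$, and why a concentration of $v$ near $t_n$ --- which would defeat a na\"ive one-edge trial function --- does no harm.

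Finally I would feed the single-shell bound into a geometric-series argument. Because the weights $b^n$ grow geometrically, $\sum_{\y_n(V)>t}b^n\asymp b^{n^*(t)}$ with $n^*(t)=\max\{n:\y_n(V)>t\}$, so the full sum is controlled by its top term alone; this is what lets me avoid any delicate orthogonality between modes attached to different shells. Taking $t=c_0/\a$ and applying the single-shell bound at $n=n^*$ gives $N_-(\BH_{\a V})\ge c_1 b^{n^*}\asymp\sum_{\y_n(V)>c_0/\a}b^n$. Hence $N_-(\BH_{\a V})=O(\a^p)$ forces $\sum_{\y_n(V)>t}b^n=O(t^{-p})$, i.e.\ $\bsymb\y(V)\in\ell_{p,w}(\N_0;b^n)$, and $N_-(\BH_{\a V})=o(\a^p)$ with $p>1/2$ forces the corresponding $o$-condition, i.e.\ membership in $\ell_{p,w}^\circ$. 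The main obstacle is establishing the single-shell bound with the stated robustness in $v$ and with the multiplicity $\asymp b^n$ correct; everything else is uniform one-dimensional Hardy analysis together with the geometric summation.
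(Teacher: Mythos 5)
Your Part~2$^\circ$ is essentially sound, and it is actually \emph{more} self-contained than the paper: the paper disposes of the converse direction simply by citing Theorem 4.6 and Corollary 4.7 of \cite{NS} (remarking only that the construction there uses functions with bounded support, hence applies to $\BA^\circ_V$), whereas your single-shell lower bound reconstructs that argument. Your key computation checks out: with $t_n\asymp\mu^n$, $\mu=b^{1/(d-1)}$, the resistance integrals $\int G_j^{-1}\,dt$ are geometric sums precisely because $\mu\neq b$ (this is where $d\neq 2$ enters), and one gets a modal Hardy constant $\gtrsim \y_n(V)$ uniformly over the $\asymp b^n$ modes with $j\le n$. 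One imprecision: in the transient case $d>2$ the dominant kinetic cost is the decay of the trial function beyond $t_{n+1}$, of size $\asymp b^{n-j}/t_{n+1}$, not the ramp from $t_j$ (which costs only $\asymp 1/t_j\ll b^{n-j}/t_{n+1}$); your ``compensation'' picture is literally correct only in the recurrent case $d<2$. Also, to conclude membership in $\ell_{p,w}(\N_0;b^n)$ you must handle large $t$ as well: finiteness of $N_-$ forces $\{\y_n\}$ to be bounded (each weight is $b^n\ge 1$), which is implicit but should be said.

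The genuine gap is in Part~1$^\circ$, for $p\ge 1$ (for $1/2<p<1$ the claim is already contained in \thmref{th:p}). Your mechanism --- apply the half-line estimates of \cite{NS} mode by mode, with constants made uniform by the geometric growth of $t_n$, and sum --- cannot work as stated, because the per-mode estimates available from \cite{NS} are of $p=1/2$ (or trace-class, $p=1$) type, and for the extremal potentials of the class $\ell_{p,w}(\N_0;b^n)$, namely $\y_n(V)\asymp b^{-n/p}$ with $p>1$, every such sum diverges: $\sum_n\y_n^{1/2}b^n=\sum_n b^{n(1-1/(2p))}=\infty$ and $\sum_n\y_n b^n=\sum_n b^{n(1-1/p)}=\infty$, even though the right-hand side of \eqref{p<1} is finite for these potentials. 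The large count in \eqref{p<1} is produced by many modes contributing $O(1)$ eigenvalues each, and no summation of within-mode bounds of the above type can control it; ``uniformity of constants'' is not the missing ingredient. What is actually needed --- and what the paper proves --- is a new endpoint at $p=\infty$: if $\bsymb\y(V)\in\ell_\infty$ then $V$ is a Hardy weight and $\|\BA^\circ_V\|\le C\sup_n\y_n(V)$, verified via the condition \eqref{hardy} and the geometric growth \eqref{reg1} (again using $b$-regularity and $d\neq 2$). The estimates \eqref{p<1} and \eqref{strong<1} then follow by real interpolation of the map $v\mapsto\BA^\circ_V$ between this $\infty$-endpoint and the $p=1/2$ estimate \eqref{1/2}, exactly as in the proof of Theorem 4.1 of \cite{NS}. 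Your sketch never identifies, let alone establishes, this endpoint, and without it the statement for $p\ge 1$ is out of reach.
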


\section{Proofs}\label{prfs}
\subsection{Using the Birman -- Schwinger principle}\label{prnc}
The proofs of all theorems \ref{genthm} -- \ref{th:pb} are based
upon the Birman -- Schwinger principle, see
\cite{BS,BS1} for its exposition most convenient for our
purposes. In order to formulate it for the particular case we
need, let us consider the quadratic form
\begin{equation}\label{bb}
\ba_V[u]=\int_\G V(x)|u(x)|^2dx
\end{equation}
where $V\ge 0$ is a Hardy weight on $\CH^\circ(\G)$. Then the quadratic
form $\ba_V$ is bounded in this space, and therefore, it generates
in $\CH^\circ(\G)$ a bounded, self-adjoint and non-negative operator which we denote
$\BA_V^\circ$. The notation $A^\circ_{\G,V}$ was used for this
operator in \cite{NS}, see Eq. (3.7) there.

The following statement expresses the Birman -- Schwinger
principle for the case of operators on trees.
\begin{prop}\label{l-BSCH}
Let a Hardy weight $V$ on $\CH^\circ(\G)$ be such that the
operator $\BA^\circ_V$ is compact. Then for any $\a>0$ the
quadratic form \eqref{shrqf} is bounded below and closed in
$L_2(\G)$, the negative spectrum of the corresponding operator
$\BH_{\a V}$ is finite, and moreover,
\begin{equation}\label{birsh}
N_-(\BH_{\a V})=n(\a^{-1};\BA^\circ_V),\qquad\forall\a>0.
\end{equation}
\end{prop}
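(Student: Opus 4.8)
The plan is to pass from the operator $\BH_{\a V}$ acting in $L_2(\G)$ to the bounded operator $\BA^\circ_V$ acting in $\CH^\circ(\G)$ by means of the variational (Glazman) principle, and then to read off the count $N_-$ from the eigenvalues of $\BA^\circ_V$ via the spectral theorem for compact operators. The basic observation is that, by the very definition of $\BA^\circ_V$, the form $\ba_V$ admits the representation $\ba_V[u]=\langle\BA^\circ_V u,u\rangle_{\CH^\circ}$, where $\langle u,w\rangle_{\CH^\circ}=\int_\G u'\overline{w'}\,dx$ is the inner product of $\CH^\circ(\G)$ compatible with $\|u\|_{\CH^1(\G)}=\|u'\|$. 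Consequently, for every $u\in H^{1,0}(\G)$,
\begin{equation*}
\bh_{\a V}[u]=\|u'\|^2-\a\,\ba_V[u]=\langle(\BI-\a\BA^\circ_V)u,u\rangle_{\CH^\circ}.
\end{equation*}
Thus $\bh_{\a V}$ is, up to the change of the ambient inner product from $L_2(\G)$ to $\CH^\circ(\G)$, the quadratic form of the bounded self-adjoint operator $\BI-\a\BA^\circ_V$.

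First I would establish that $\bh_{\a V}$ is bounded below and closed in $L_2(\G)$ for every $\a>0$. For $\a<C_V^{-1}$ this is immediate from the Hardy inequality \eqref{hw}, which gives $\bh_{\a V}[u]\ge(1-\a C_V)\|u'\|^2\ge 0$ and a norm equivalent to $\|u'\|$. For arbitrary $\a$ one uses the compactness of $\BA^\circ_V$: expanding it in its system of eigenvectors with eigenvalues $\mu_1\ge\mu_2\ge\cdots\to 0$, one splits $\ba_V$ into a finite-rank part and a remainder of arbitrarily small relative bound with respect to $\|u'\|^2$. The perturbation $\a\,\ba_V$ is therefore form-compact relative to the Dirichlet form $\|u'\|^2$, and the abstract Birman--Schwinger lemma of \cite{BS,BS1} applies: $\bh_{\a V}$ is bounded below and closed, and the operator $\BH_{\a V}$ it generates has finite negative spectrum. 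This is the step in which compactness, rather than mere boundedness, of $\BA^\circ_V$ is essential, and I expect it to be the main obstacle, since it requires controlling the finite-rank part of $\ba_V$ in $L_2$-terms and justifying closedness in the strong-coupling regime, where the homogeneous and inhomogeneous relative bounds must be reconciled.

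Granting this, the identity \eqref{birsh} follows from the variational principle. By Glazman's lemma, $N_-(\BH_{\a V})$ equals the maximal dimension of a subspace $\CL\subset H^{1,0}(\G)$ on which $\bh_{\a V}[u]<0$, hence on which $\langle(\BI-\a\BA^\circ_V)u,u\rangle_{\CH^\circ}<0$. Since $H^{1,0}(\G)$ is by definition dense in $\CH^\circ(\G)$ and the quadratic form $u\mapsto\langle(\BI-\a\BA^\circ_V)u,u\rangle_{\CH^\circ}$ is continuous on $\CH^\circ(\G)$, this maximal dimension is unchanged if the supremum is taken over subspaces of $\CH^\circ(\G)$: any finite-dimensional subspace on which the continuous form is negative definite can be approximated, without lowering its dimension, by a subspace of the dense set $H^{1,0}(\G)$, while the opposite inequality is trivial. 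Finally, the spectral theorem for the compact self-adjoint operator $\BA^\circ_V$ shows that the maximal dimension of a subspace of $\CH^\circ(\G)$ on which $\BI-\a\BA^\circ_V$ is negative equals the number of eigenvalues $\mu$ of $\BA^\circ_V$ with $1-\a\mu<0$, that is, with $\mu>\a^{-1}$. By the definition of the counting function this number is exactly $n(\a^{-1};\BA^\circ_V)$, which proves \eqref{birsh}.
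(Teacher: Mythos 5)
Your proof is correct, and in essence it follows the same route as the paper: the paper offers no proof of this proposition at all, presenting it as the Birman--Schwinger principle and deferring its analytic content to \cite{BS,BS1} --- which is exactly what you do for the one genuinely hard step, namely that compactness of $\BA^\circ_V$ makes $\bh_{\a V}$ bounded below and closed in $L_2(\G)$ for every $\a>0$. What you add, and the paper omits, is the explicit counting argument: the representation $\bh_{\a V}[u]=\langle(\BI-\a\BA^\circ_V)u,u\rangle_{\CH^\circ}$, Glazman's lemma, the passage from subspaces of $H^{1,0}(\G)$ to subspaces of $\CH^\circ(\G)$ (legitimate, since $\CH^\circ(\G)$ is by definition the closure of $H^{1,0}(\G)$ and the form is $\CH^\circ$-continuous), and the count via the spectral theorem for the compact self-adjoint operator $\BA^\circ_V$; all of this is carried out correctly and is the standard unpacking of the principle. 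One caution about the step you yourself flag as the main obstacle: the finite-rank splitting of $\BA^\circ_V$ cannot by itself deliver a KLMN-type bound $\a\,\ba_V[u]\le(1-\d)\|u'\|^2+C_\d\|u\|^2_{L_2(\G)}$, because the finite-rank part is built from the $\CH^\circ$-inner products with the eigenvectors $e_j$ of $\BA^\circ_V$, i.e.\ from the functionals $u\mapsto\int_\G u'\,\overline{e_j'}\,dx$, which are not continuous in the $L_2(\G)$-norm. The standard way to close this (and the way it is done in the cited sources) is a weak-convergence argument: a sequence bounded in $\CH^\circ(\G)$ with $\|u_n\|_{L_2}\to0$ converges weakly to $0$ in $\CH^\circ(\G)$ (point evaluations are $\CH^\circ$-continuous, and $L_2$-convergence gives a.e.\ convergence along a subsequence), so compactness of $\BA^\circ_V$ forces $\ba_V[u_n]\to0$, and the infinitesimal form bound follows by contradiction. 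Since you defer exactly this point to the abstract lemma of \cite{BS,BS1}, just as the paper does, this is a flaw only in your sketch of the mechanism, not a gap in the proof as you structured it.
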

This proposition reduces estimates of $N_-(0;\BH_{\a
V})$ to the spectral analysis of the compact operator
$\BA^\circ_V$. Such analysis was carried out in \cite{NS}.  \vs

The equality \eqref{birsh} implies that for any $p>0$
\begin{equation}\label{pr:1}
\sup_{\a>0}\,\a^{-p}N_-(\BH_{\a V})=\sup_{s>0}
s^p\,n(s;\BA^\circ_V) =\|\BA^\circ_V\|^p_{p,w}
\end{equation}
and \[\limsup_{\a\to\infty}\,\a^{-p}N_-(\BH_{\a
V})=\limsup_{s\to0} s^p\,n(s;\BA^\circ_V)
=\inf_{\BT\in\CC_{p,w}^\circ}\|\BA^\circ_V-\BT\|^p_{p,w}.\]

\subsection{Proof of Theorems \ref{genthm}--\ref{th:p}.}\label{pr1}
Due to \eqref{pr:1}, the inequality \eqref{bas} turns into the
estimate (3.14) in Theorem 3.4 of the paper \cite{NS}. By the
inequality (3.13) in the same theorem, finiteness of the integral
in \eqref{narr} implies $\BA_V^\circ\in \CC_p$. Since
$\CC_p\subset\CC_{p,w}$, this immediately yields \eqref{narr}.

The inequality \eqref{lsup} follows from \eqref{bas}
automatically, see the proof of a similar statement in \cite{BS},
Section 4.3. Note also that below we give the detailed proof of the inequality
\eqref{lsup3} in \thmref{th:p}. The proof of \eqref{lsup} can be reconstructed
by analogy.
\thmref{special} is a particular case of \thmref{genthm} for a
special choice of the weight function, and it does not need a
separate proof.

\thmref{th:p}, except for the estimate \eqref{lsup3}, is a direct
consequence (actually, just a reformulation in the equivalent
terms) of Theorem 4.1 in \cite{NS}, for the regular trees and the
symmetric weights. One only has to take into account that in
\cite{NS} the operators $\BA_V$ were considered, rather than
$\BA^\circ_V$. The operator $\BA_V$ corresponds to the  quadratic
form \eqref{bb} on the space $\CH^{1,0}(\G)$. If the tree is
transient, this space is wider than $\CH^\circ(\G)$. For this
reason,
\begin{equation*}
    \|\BA^\circ_V\|_\CC\le \|\BA_V\|_\CC,
\end{equation*}
where $\|\cdot\|_\CC$ stands for the (quasi-)norm in any class of
operators we are dealing with. It immediately follows that all the
estimates derived in \cite{NS} for $\BA_V$, hold also for the
operator $\BA_V^\circ$.

To justify \eqref{lsup3}, we note that for any $V\in L_1(\G)$ with
compact support the estimate \eqref{1/2} holds. For the
corresponding operator $\BA_V^\circ$ this implies
\[ n(s;\BA_V^\circ)=O(s^{-1/2})=o(s^{-p})\] for any $p>1/2$. Given an $\e>0$,
we can find a finite subset $E_\e\subset\N$ such that

\[ \sup_{t>0}\left(t^p\sum_{n\notin E_\e:\y_n(V)>t}g_0(t_{n+1})\right)
\le \limsup_{t>0}\left(t^p\sum_{n\in\N_0
:\y_n(V)>t}g_0(t_{n+1})\right)+\e.\] Let $\chi_\e$ stand for the
characteristic function of the set $\cup_{n\in
E_\e}(t_n,t_{n+1})$. Take $V_\e(x)=V(x)\chi_\e(|x|)$ and
$V^\e(x)=V(x)-V_\e(x)$, then $\BA_V^\circ=\BA_{V_\e}^\circ+
\BA_{V^\e}^\circ$ and $n(s;\BA_{V_\e}^\circ)=o(s^{-p})$. It
follows from \eqref{<1} that
\begin{gather*}
\limsup_{s\to 0} s^p n(s;\BA_V^\circ)=\limsup_{s\to 0} s^p n(s;\BA_{V^\e}^\circ)\\ \le
\sup_{t>0}\left(t^p \sum_{\y_n(V^\e)>t}g_0(t_{n+1})\right)\le
\limsup_{t>0}\left(t^p\sum_{n\in\N_0 :\y_n(V)>t}g_0(t_{n+1})\right)+\e.
\end{gather*}
Since $\e>0$ is arbitrary, \eqref{lsup3} is justified.

\subsection{Proof of \thmref{th:pb}.}
The estimate \eqref{p<1} for $1/2<p<1$ is a part of \thmref{th:p}
above. Now we prove that if $\bsymb\y(V)\in\ell_\infty$, then the
operator $\BA_V^\circ$ is bounded, with the estimate
\begin{equation*}
\|\BA_V^\circ\|\le C \sup_n\y_n(v).
\end{equation*}
To check this, it is sufficient to show that $V(x)$ is a Hardy
weight on $H^\circ(\G)$. Suppose first that $d<2$, so that the
tree is recurrent. The condition \eqref{hardy} is equivalent to
\[\int_t^\infty v(s)s^{d-1}ds\le Ct^{d-2},\qquad\forall t>0.\]
Since the numbers $t_n$ grow as a geometric progression, it is
sufficient to check this inequality for the values $t=t_n,\
n\in\N$. We have
\begin{gather*}
\int_{t_n}^\infty v(s)s^{d-1}ds=\sum_{j=n}^\infty
\int_{t_j}^{t_{j+1}}v(s)s^{d-1}ds\\
\le\sum_{j=n}^\infty t_{j+1}^{d-2}\y_j(V)\le
\sup_j\left(\y_j(V)\sum_{j=n}^\infty t_{j+1}^{d-2}\right).
\end{gather*}
By \eqref{reg1}, the series in the last expression converges and its sum is
controlled by $t_n^{d-2}$. This gives the desired result in the recurrent case.
For the transient case $d>2$ the argument is similar.

The estimates \eqref{p<1} and \eqref{strong<1} follow from the
estimates for $p=1/2$ and $p=\infty$ by the same interpolation
argument as the one used in \cite{NS} for the proof of Theorem
4.1. For interpolation, it is necessary to extend the definition
of the operator $\BA_V^\circ$ to the case where $V(x)$ is an arbitrary measurable
function on $\G$, such that $|V(x)|$ is a Hardy weight. This extension is evident
and we give no detail; cf. Section 3.1 in \cite{NS}.

We interpolate the mapping $\Pi: v\mapsto \BA_V^\circ,\
V(x)=v(|x|)$. In both cases, one of the basic results for
interpolation is the one for $p=1/2$, that is the the estimate
\[ \|\BA^\circ_V\|_{1/2,w}\le C\|\bsymb\y(V)\|_{\ell_{1/2}}.\]
Recall that this is equivalent to \eqref{1/2}. Under the
assumptions of \thmref{th:pb}, due to the special character of the
tree and the function $V$, another basic result concerns the case
$p=\infty$, while for the general trees and arbitrary potentials
we could use only Theorem 3.3 in \cite{NS}, which corresponds to
the case $p=1$. This argument justifies the statement 1$^\circ$ of
\thmref{th:pb}. \vs

For justifying the statement 2$^\circ$, we use Theorem 4.6 and
Remark to Corollary 4.7 in \cite{NS}. Note that in this theorem
the properties of the operators $\BA_V$ on $\CH^{1,0}(\G)$ were
discussed. However, the construction in the proof of Theorem 4.6,
suggested in \cite{NS}, uses only functions with bounded support,
and hence the result applies also to the operators $\BA_V^\circ$.
Note also that Corollary 4.7 in \cite{NS} summarizes the results
of Theorems 4.1 and 4.6 of this paper. For the $b$-regular trees
the first of these results extends to all $p>1/2$, and this
automatically leads to extension of the Corollary 4.7 to all such
$p$.

\section{Neumann boundary condition at the root}\label{neum}
Here we discuss the changes in the above results, appearing if in \eqref{sch} we replace
the Dirichlet Laplacian $\D_\CD$ by the Neumann Laplacian
$\D_\CN$. In order to indicate the difference between these two
cases, below we denote the corresponding operators by $\BH_{\a
V;\CD}$ and $\BH_{\a V;\CN}$. It immediately follows from the
variational principle that
\[ N_-(\BH_{\a V;\CD})\le N_-(\BH_{\a V;\CN})\le N_-(\BH_{\a V;\CD})+1,\]
so that both quantities behave in a similar way as $\a\to\infty$.
The behavior in the weak coupling limit (i.e., as $\a\to 0$) may
differ. Indeed, if the tree $\G$ is recurrent, then for any
non-trivial potential $V\ge0$ the operator $\BH_{\a V;\CN}$ has at
least one negative eigenvalue. Hence, no estimate homogeneous  in
$\a$ is possible for $N_-(\BH_{\a V;\CN})$ in the recurrent case.

\vs Such estimates of $N_-(\BH_{\a V;\CN})$ are
possible for the transient trees. They are based upon an
analogue of Proposition \ref{l-BSCH} for the $\CN$-case. In order
to define an appropriate substitute for the operator
$\BA_V^\circ$, we need one more version of the homogeneous Sobolev
space on $\G$.\vs

Let $\G$ be a regular transient tree, and let $u\in H^1(\G)$ be a
function with bounded support. Suppose also that $u$ is symmetric,
that is, $u(x)=\phi(|x|)$ where $\phi(t)$ is some function on
$[0,\infty)$. Then
\[ u(o)=-\int_0^\infty \phi'(t)dt=-\int_0^\infty \phi'(t)\sqrt{g_0(t)}
\frac{dt}{\sqrt{g_0(t)}},\] whence
\begin{equation}\label{u(o)}
    |u(o)|^2\le l(\G)\int_0^\infty|\phi'(t)|^2g_0(t)dt
    =l(\G)\int_\G|u'|^2dx.
\end{equation}
Here $l(\G)$ is the reduced height of the tree $\G$, see
\eqref{trans}. The resulting inequality extends to all $u\in
H^1(\G)$ with bounded support, since for the symmetric component $u_{sym}$ of $u$ we
always have $\|u'_{sym}\|\le \|u'\|$ by \cite{S}, Theorem 3.2
(applied to the function $u'$).

Define the space $\CH(\G)$ as the completion of $H^1(\G)$ in the norm
\[ \|u\|_\CH:=\|u'\|_{L_2(\G)}.\]
The inequality \eqref{u(o)} extends by continuity to all $u\in
\CH(\G)$, and it shows that $\CH(\G)$ can be considered as a
function space on $\G$. This is no more true if the tree is
recurrent: then the elements of such completion are the classes of
functions, differing by an arbitrary constant.\vs

Below we assume that $\G$ is a transient regular tree. We have
shown that then the linear functional $\g: u\mapsto u(o)$ on the
space $\CH(\G)$ is continuous, and
\begin{equation}\label{gam}
    \|\g\|\le\sqrt{l(\G)}.
\end{equation}
The space $\CH^\circ(\G)$, introduced in Section \ref{sob}, can be
realized as the subspace $\{u\in\CH(\G):\g u=0\}$. Clearly,
\begin{equation*}
    \dim \CH(\G)/\CH^\circ(\G)=1.
\end{equation*}

 By the Riesz theorem, there is a unique
function $h\in \CH(\G)$, such that
\[ u(o)=(u,h)_{\CH(\G)}=\int_\G u'\overline{h'}dx,\qquad\forall
u\in\CH(\G).\] It is easy to see that this function is symmetric, and is given by
\begin{equation*}
    h(x)=h_0(|x|)\ {\text{ where }}\ h_0(t)=\int_t^\infty\frac{ds}{g_0(s)}.
\end{equation*}
Actually, $h(t)$ is the harmonic function on $\G$, vanishing at
infinity and such that $h(o)=h_0(0)=l(\G)$. Its norm in $\CH(\G)$
is given by
\[ \|h\|^2_{\CH}=\int_\G|h'(x)|^2dx=\int_0^\infty{h_0'(t)}^2
g_0(t)dt=l(\G).\] This shows that in \eqref{gam} we actually have
the equality sign.\vs

Let a function $V(x)\ge 0$ on a transient regular tree $\G$ be
such that the quadratic form \eqref{bb} is bounded in the space
$\CH(\G)$. Then it defines a bounded operator in this space, say,
$\wt\BA_V$. It also defines the bounded operator $\BA^\circ_V$ in
its subspace $\CH^\circ(\GH)$, see Section \ref{prnc}. Our next
goal is to compare their (quasi-)norms in various spaces of
operators.

\begin{thm}\label{N}
Let $\G$ be a transient regular tree, and let $V\in L_1(\G),\ V\ge
0$. Suppose that the operator $\BA^\circ_V$ belongs to some space
$\CC$, where $\CC=\CC_p,\CC_{p,w}$, or $\CC_{p,w}^\circ$. Then the
operator $\wt\BA_V$ belongs to the same space, and, moreover,
\begin{equation}\label{n-tr}
    \|\wt\BA_V\|_\CC\le
    C\left(\|\BA_V^\circ\|_\CC+t_1\int_{e_0}Vdx\right).
\end{equation}
where $e_0$ stands for the edge of $\G$ emanating from the root,
and the constant $C$ depends only on the reduced height $l(\G)$
and on the length of $e_0$ (that is, on the number $t_1$).
\end{thm}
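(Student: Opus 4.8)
The plan is to regard $\BA_V^\circ$ as the compression of $\wt\BA_V$ to the codimension-one subspace $\CH^\circ(\G)\subset\CH(\G)$, and then to control the singular numbers of $\wt\BA_V$ by those of $\BA_V^\circ$ through Cauchy interlacing, handling the single ``extra'' eigenvalue by a direct form estimate. Let $P$ be the orthogonal projection of $\CH(\G)$ onto $\CH^\circ(\G)$ and $Q=I-P$ the rank-one projection onto $\langle h\rangle$. Since both $\wt\BA_V$ and $\BA_V^\circ$ are generated by the same non-negative form $\ba_V$, a one-line computation with the forms shows that for $u^\circ,w^\circ\in\CH^\circ(\G)$ one has $(P\wt\BA_V P u^\circ,w^\circ)_\CH=\int_\G V u^\circ\overline{w^\circ}\,dx=(\BA_V^\circ u^\circ,w^\circ)_\CH$; that is, $\BA_V^\circ$ is exactly the compression of $\wt\BA_V$. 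Writing $\wt\BA_V=P\wt\BA_V P+(P\wt\BA_V Q+Q\wt\BA_V)$ exhibits $\wt\BA_V$ as the sum of a compact operator (because $\BA_V^\circ\in\CC$ is compact) and an operator of rank at most two; in particular $\wt\BA_V$ is compact, self-adjoint and non-negative.

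First I would establish boundedness of $\ba_V$ on $\CH(\G)$ together with the estimate for the top eigenvalue $s_1(\wt\BA_V)=\|\wt\BA_V\|$. Fix the test function $\eta$ supported on $e_0$, linear with $\eta(o)=1$ and vanishing at the first vertex, so that $\|\eta\|^2_\CH=t_1^{-1}$ and $\int_\G V\eta^2\,dx\le\int_{e_0}V\,dx$. For $u\in\CH(\G)$ decompose $u=u^\circ+u(o)\eta$ with $u^\circ:=u-u(o)\eta\in\CH^\circ(\G)$; using $|u(o)|=|\g u|\le\sqrt{l(\G)}\,\|u\|_\CH$ from \eqref{gam} one gets $\|u^\circ\|_\CH\le(1+\sqrt{l(\G)/t_1})\|u\|_\CH$, whence
\[ \int_\G V|u|^2\,dx\le 2\int_\G V|u^\circ|^2\,dx+2|u(o)|^2\!\int_{e_0}\!V\,dx\le C\|\BA_V^\circ\|\,\|u\|^2_\CH+2l(\G)\Big(\int_{e_0}\!V\,dx\Big)\|u\|^2_\CH, \]
with $C$ depending only on $l(\G)$ and $t_1$. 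This yields $\|\wt\BA_V\|\le C\big(\|\BA_V^\circ\|+t_1\int_{e_0}V\,dx\big)$ after absorbing the factor $t_1$ into the constant, and in particular confirms that $\ba_V$ is bounded on $\CH(\G)$.

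Then I would invoke Cauchy interlacing for the codimension-one compression, which gives $s_{n+1}(\wt\BA_V)\le s_n(\BA_V^\circ)$ for all $n\ge1$, so that only the leading singular number escapes direct domination by $\BA_V^\circ$. For the weak class, splitting $\sup_n n^{1/p}s_n(\wt\BA_V)$ into the term $n=1$ (controlled by $\|\wt\BA_V\|$, using $\|\BA_V^\circ\|=s_1(\BA_V^\circ)\le\|\BA_V^\circ\|_{p,w}$) and the terms $n\ge2$ (where $n^{1/p}\le2^{1/p}(n-1)^{1/p}$ gives $n^{1/p}s_n(\wt\BA_V)\le2^{1/p}\|\BA_V^\circ\|_{p,w}$) produces \eqref{n-tr} for $\CC=\CC_{p,w}$; the same index shift gives $\|\wt\BA_V\|_p^p\le\|\wt\BA_V\|^p+\|\BA_V^\circ\|_p^p$ for $\CC=\CC_p$, while $s_{n+1}(\wt\BA_V)=o(n^{-1/p})$ whenever $s_n(\BA_V^\circ)=o(n^{-1/p})$ disposes of $\CC=\CC^\circ_{p,w}$. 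The only genuine obstacle is the leading eigenvalue: the interlacing loses one singular number, and the content of \eqref{n-tr} is precisely that this lost value is absorbed by the local quantity $t_1\int_{e_0}V\,dx$, which is exactly what the form estimate of the second paragraph is designed to supply. The remaining passage from the singular-number shift to the three (quasi-)norms is routine.
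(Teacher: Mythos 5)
Your proof is correct, and it reaches \eqref{n-tr} by a mechanism genuinely different from the paper's. Both arguments hinge on the same one-dimensional cutoff (your $\eta$ is the paper's $\phi$) and on the same two-term estimate of the form $\ba_V$, but the paper never invokes compressions or interlacing: from \eqref{es} and \eqref{es1} it dominates $\wt\BA_V$, through the bounded map $u\mapsto(u_0,u(o)\phi)$ of $\CH(\G)$ into $\CH^\circ(\G)\oplus\C\phi$, by the direct sum of $\BA_V^\circ$ and the rank-one multiplication by the number $M=t_1\int_{e_0}Vdx=\y_0(V)$, and then \eqref{n-tr} follows because a rank-one summand inserts exactly one extra singular value into the sequence. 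You instead use the exact identity $\BA_V^\circ=P\wt\BA_V P\res \CH^\circ(\G)$ (valid because $\CH^\circ(\G)=\ker\g$ has codimension one in $\CH(\G)$, with orthogonal complement spanned by $h$) together with Cauchy interlacing, which gives the constant-free shift $s_{n+1}(\wt\BA_V)\le s_n(\BA_V^\circ)$; the quadratic-form estimate is then needed only for the single escaping value $s_1(\wt\BA_V)=\|\wt\BA_V\|$, and compactness of $\wt\BA_V$ comes from your rank-two-perturbation remark. The trade-off: your route is more economical, since the bookkeeping of \eqref{es1} is confined to the top eigenvalue and the whole tail is dominated with constant $1$, whereas the paper's domination argument works directly with the non-orthogonal decomposition $u=u_0+u(o)\phi$ (no need to identify the orthogonal complement of $\CH^\circ(\G)$ via the Riesz representative $h$) and exhibits the extra singular value concretely as $\y_0(V)$, the local quantity that reappears in \thmref{genthm-N}. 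Note finally that both proofs must, and do, first establish that $\ba_V$ is bounded on all of $\CH(\G)$ --- this is not part of the hypothesis, which only concerns $\BA_V^\circ$ --- and your second paragraph supplies exactly this.
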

\begin{proof}
Define a function $\phi$ on $\G$ in the following way. Let
$\phi(x)=0$ everywhere outside the edge $e_0$. We identify $e_0$
 with the segment $[0,t_1]$ and take
 $\phi(x)=t_1^{-1}(t_1-x),\ x\in e_0$. Given a function $u\in \CH^1(\G)$,
 we decompose it as
 \[ u(x)=u_0(x)+u(o)\phi(x),\]
 then $u_0\in \CH^{\circ}(\G)$. We have
\begin{gather}
 \int_\G V(x)|u(x)|^2dx\le 2\left(\int_\G V(x)|u_0(x)|^2dx+
|u(o)|^2\int_{e_0}V(x)|\phi(x)|^2dx\right)\notag\\
\le 2\left(\int_\G V(x)|u_0(x)|^2dx+|u(o)|^2\int_{e_0}V(x)dx\right).\label{es}
\end{gather}
 Further, we have $\|\phi'\|^2=t_1^{-1}$, and hence,
\[\|u_0'\|\le \|u'\|+|u(o)|\|\phi'\|\le\|u'\|\left(1+\sqrt{l(G)t_1^{-1}}\right).\]
It follows that
\begin{equation}\label{es1}
\|u_0'\|^2+|u(o)|^2\|\phi'\|^2\le \|u'\|^2(2+3l(\G)t_1^{-1}).
\end{equation}

The inequalities \eqref{es} and \eqref{es1} show that the (quasi-)norm $\|\wt\BA_V\|_\CC$ is controlled by the sum of two terms. The first is $\|\BA_V^\circ\|_\CC$ and the second is the
  (quasi-)norm in $\CC$ of the operator of multiplication by the number
$M=\int_{e_0}Vdx\|\phi'\|^{-2}=t_1\int_{e_0}Vdx=\y_0(V)$, acting in the one-dimensional space,
generated by the function $\phi(x)$.
 This results in the estimate \eqref{n-tr}.
\end{proof}

An analogue of Proposition \ref{l-BSCH} for the $N$-case shows that
\[ N_-(\BH_{\a V;\CN})=n(\a^{-1};\wt\BA_V).\]
As a result of this equality and \thmref{N}, we come to the following general conclusions.

\begin{thm}\label{genthm-N}
Let $\G$ be a regular, transient metric tree and let $V\in L_1(\G),\ V\ge 0$.
Then

1$^\circ$ Each estimate \eqref{bas}, \eqref{narr}, \eqref{bas1}, \eqref{narr1}, \eqref{<1},
\eqref{str<1}, \eqref{p<1}, \eqref{strong<1}, with an additional term $C\a^p t_1\int_{e_0}Vdx$ in the right-hand side,
holds for the operator $\BH_{\a V;\CN}$. If $p=1/2$, the same is true for the estimate \eqref{1/2},
and the asymptotic formula \eqref{w} remains valid.

2$^\circ$ the inequalities \eqref{lsup}, \eqref{lsup1}, \eqref{lsup3}, and \eqref{p<1bis},
remain valid for the operator $\BH_{\a V;\CN}$. In particular, the finiteness of the right-hand sides in \eqref{narr}, \eqref{narr1}, \eqref{str<1}, and \eqref{strong<1} implies \eqref{osmall}.

3$^\circ$ The statement 2$^\circ$ of \thmref{th:pb} remains valid for the operator $\BH_{\a V;\CN}$.
\end{thm}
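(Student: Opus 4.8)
The plan is to transport every Dirichlet estimate of Theorems~\ref{genthm}--\ref{th:pb} to the Neumann operator by means of just two facts: the Neumann form of the Birman--Schwinger principle recorded before the statement, $N_-(\BH_{\a V;\CN})=n(\a^{-1};\wt\BA_V)$, and \thmref{N}, which compares $\wt\BA_V$ with $\BA^\circ_V$ in each of the classes $\CC_p,\CC_{p,w},\CC^\circ_{p,w}$. Exactly as \eqref{pr:1} is read off from \eqref{birsh}, the Neumann identity gives $\sup_{\a>0}\a^{-p}N_-(\BH_{\a V;\CN})=\|\wt\BA_V\|_{p,w}^p$ together with the corresponding $\limsup$ formula. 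Consequently each estimate in Theorems~\ref{genthm}--\ref{th:pb} is equivalent to (or, for the $\CC_p$-estimates, implied by) a bound on a suitable $\CC$-(quasi-)norm of $\BA^\circ_V$; \thmref{N} carries that bound over to $\wt\BA_V$ at the cost of the additive term $t_1\int_{e_0}V\,dx=\y_0(V)$, and the Neumann Birman--Schwinger identity then returns a bound on $N_-(\BH_{\a V;\CN})$.

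For statement 1$^\circ$ I would treat the listed estimates uniformly. Each of \eqref{bas}, \eqref{bas1}, \eqref{<1}, \eqref{p<1} amounts to $\|\BA^\circ_V\|_{p,w}^p\le R$, with $R$ the displayed right-hand side divided by $\a^p$; each of \eqref{narr}, \eqref{narr1}, \eqref{str<1}, \eqref{strong<1} is the analogous bound in $\CC_p$; and \eqref{1/2} is the case $\CC=\CC_{1/2}$. Feeding these into \thmref{N} in the appropriate class and raising to the power $p$ --- using the triangle inequality for $p\ge1$ and the quasi-triangle inequality $(a+b)^p\le a^p+b^p$ for $p<1$ --- reproduces the original right-hand side together with a term governed by $\y_0(V)=t_1\int_{e_0}V\,dx$, which is precisely the announced additional term $C\a^p t_1\int_{e_0}V\,dx$. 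The asymptotic formula \eqref{w} needs a separate remark: by the variational inequality $N_-(\BH_{\a V;\CD})\le N_-(\BH_{\a V;\CN})\le N_-(\BH_{\a V;\CD})+1$ quoted at the opening of Section~\ref{neum}, the two counting functions differ by at most one, so after division by $\a^{1/2}$ the correction vanishes in the limit and \eqref{w} survives unchanged.

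The same variational inequality settles statement 2$^\circ$ with no further operator analysis. Writing $s=\a^{-1}$, it reads $|n(s;\wt\BA_V)-n(s;\BA^\circ_V)|\le1$ for all $s>0$, whence $\limsup_{s\to0}s^p\,n(s;\wt\BA_V)=\limsup_{s\to0}s^p\,n(s;\BA^\circ_V)$; via the Neumann analogue of the $\limsup$-identity accompanying \eqref{pr:1} this means that the Dirichlet $\limsup$-bounds \eqref{lsup}, \eqref{lsup1}, \eqref{lsup3}, \eqref{p<1bis} hold \emph{verbatim} for $\BH_{\a V;\CN}$, and that finiteness of the $\CC_p$-right-hand sides of \eqref{narr}, \eqref{narr1}, \eqref{str<1}, \eqref{strong<1} again forces \eqref{osmall}. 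Statement 3$^\circ$ is even more direct: the lower bound $N_-(\BH_{\a V;\CD})\le N_-(\BH_{\a V;\CN})$ shows that $N_-(\BH_{\a V;\CN})=O(\a^p)$ implies $N_-(\BH_{\a V;\CD})=O(\a^p)$, so $\bsymb\y(V)\in\ell_{p,w}(\N_0;b^n)$ by statement 2$^\circ$ of \thmref{th:pb}, and the $o$-assertion is obtained in the same way.

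The only real work sits in statement 1$^\circ$. One must follow the rank-one boundary contribution honestly through each (quasi-)norm and through the power-$p$ step, where for $p<1$ nothing better than a quasi-triangle inequality is available; this is the point at which the exact shape of the additional term is pinned down. Underlying \thmref{N} itself is the decomposition $\CH(\G)=\CH^\circ(\G)\oplus\C h$, in which $\wt\BA_V$ acquires the block form $\left(\begin{smallmatrix}\BA^\circ_V&\bb\\\bb^*&d\end{smallmatrix}\right)$ with $d=\int_\G V|h|^2\,dx\le l(\G)^2\|V\|_1<\infty$; verifying that this block form is legitimate --- that is, that $d$ and the coupling vector $\bb$ are finite --- is exactly where the hypotheses $V\in L_1(\G)$ and transience of $\G$ enter. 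Once \thmref{N} is granted, statements 2$^\circ$ and 3$^\circ$ are soft consequences of the elementary two-sided variational inequality.
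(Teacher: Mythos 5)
Your overall route is the paper's own: the paper obtains \thmref{genthm-N} exactly by combining the Neumann Birman--Schwinger identity $N_-(\BH_{\a V;\CN})=n(\a^{-1};\wt\BA_V)$ with \thmref{N}, and the verbatim survival of the $\limsup$ bounds, of \eqref{w}, and of statement 3$^\circ$ is precisely the kind of soft consequence of $N_-(\BH_{\a V;\CD})\le N_-(\BH_{\a V;\CN})\le N_-(\BH_{\a V;\CD})+1$ that you describe; your statements 2$^\circ$ and 3$^\circ$ are correct as written.

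The problem is in statement 1$^\circ$, at the very step you yourself single out as ``the point at which the exact shape of the additional term is pinned down,'' but then do not carry out. What \eqref{n-tr} actually yields is $\|\wt\BA_V\|_{p,w}^p\le C^p\bigl(\|\BA^\circ_V\|_{p,w}+\y_0(V)\bigr)^p\le C_p\bigl(\|\BA^\circ_V\|_{p,w}^p+\y_0(V)^p\bigr)$, i.e.\ an additional term $C\a^p\bigl(t_1\int_{e_0}V\,dx\bigr)^p$ --- the $p$-th power, not the first power, of the boundary integral. For $p\neq 1$ these are genuinely different, and the linear form cannot be reached by any repair: the boundary (rank-one) contribution to the counting function equals $1$ as soon as $\y_0(V)\gtrsim\a^{-1}$, and at that threshold $\a^p\,\y_0(V)\sim\y_0(V)^{1-p}$, which is not bounded away from $0$ uniformly in $V$. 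Concretely, for \eqref{narr1} with $p>1$ take $V(x)=|x|^{-2}$ for $\e<|x|<t_1$ and $V=0$ elsewhere: then $\y_0(V)\sim t_1/\e$, the Dirichlet right-hand side is only $\ln(t_1/\e)$, and testing with the harmonic function $h$ shows $N_-(\BH_{\a V;\CN})\ge 1$ already for $\a\sim\e$; at such $\a$ both $\a^p\ln(t_1/\e)$ and $\a^p\y_0(V)\sim\e^{p-1}$ tend to $0$ as $\e\to0$, so the estimate with the linear additional term fails, while the $p$-th-power term $\a^p\y_0(V)^p$ stays of order one, as it must. This defect is inherited from the paper's own formulation (the paper gives no detailed derivation), but your assertion that the computation reproduces ``precisely'' the announced term is the one step of the write-up that is false; the honest conclusion is 1$^\circ$ with additional term $C\bigl(\a\, t_1\int_{e_0}V\,dx\bigr)^p$. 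A last, harmless, remark: your sketch of what underlies \thmref{N} (orthogonal decomposition $\CH(\G)=\CH^\circ(\G)\oplus\C h$) is not the paper's proof, which instead writes $u=u_0+u(o)\phi$ with $\phi$ supported on $e_0$; it is this localization that makes the boundary term involve only $\int_{e_0}V\,dx$ rather than $\int_\G V|h|^2\,dx$. Since you invoke \thmref{N} as a black box, this does not affect your argument.
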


We also present a simple but useful result which follows from \thmref{N} and Theorem 3.3 in \cite{NS}.
\begin{thm}\label{tr}
Let $\G$ be a transient regular tree, and let $V\ge 0$ be a measurable function on $\G$. The operator $\wt\BA_V$
is trace class if and only if $\int_\G|x|V(x)dx<\infty$, and
\begin{equation}\label{trm}
\|\wt\BA_V\|_1=\tr\wt\BA_V\le C\left(\int_\G|x|V(x)dx+t_1\int_{e_0}Vdx\right)
\end{equation}
where $C$ is the constant from \eqref{n-tr}.
\end{thm}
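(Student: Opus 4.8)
The plan is to obtain both implications of the equivalence from exactly the two ingredients named in the statement: \thmref{N}, specialized to the trace class $\CC=\CC_1$, and the trace-class criterion of \cite{NS} (Theorem 3.3), which identifies finiteness of $\int_\G|x|V(x)dx$ as the condition for the Dirichlet operator to be trace class and controls its trace, two-sidedly, by this integral. Since $V\ge0$, the form \eqref{bb} is non-negative and hence $\wt\BA_V\ge0$, so that $\|\wt\BA_V\|_1=\tr\wt\BA_V$ holds automatically; it therefore remains only to decide when this trace is finite and to prove the bound \eqref{trm}.

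For the implication $\int_\G|x|V\,dx<\infty\Rightarrow\wt\BA_V\in\CC_1$, together with \eqref{trm}, I would start from Theorem 3.3 of \cite{NS}: under this hypothesis $\BA_V^\circ\in\CC_1$ and $\|\BA_V^\circ\|_1\le C_0\int_\G|x|V\,dx$. Substituting this into \thmref{N} with $\CC=\CC_1$ gives
\[
\|\wt\BA_V\|_1\le C\Bigl(\|\BA_V^\circ\|_1+t_1\int_{e_0}V\,dx\Bigr)
\le C\Bigl(\int_\G|x|V\,dx+t_1\int_{e_0}V\,dx\Bigr),
\]
after absorbing $C_0$, which is precisely \eqref{trm}. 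The term $t_1\int_{e_0}V\,dx$ is where integrability of $V$ near the root enters; it is finite exactly when the Neumann form is bounded on the one-dimensional complement of $\CH^\circ(\G)$ in $\CH(\G)$, consistent with its appearance in \eqref{trm}.

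For the converse, $\wt\BA_V\in\CC_1\Rightarrow\int_\G|x|V\,dx<\infty$, I would pass to the subspace $\CH^\circ(\G)\subset\CH(\G)$, which carries the same inner product $(u,w)\mapsto\int_\G u'\overline{w'}dx$. For $u,w\in\CH^\circ(\G)$ one has $(\BA_V^\circ u,w)=\ba_V[u,w]=(\wt\BA_V u,w)$, so $\BA_V^\circ=\iota^*\wt\BA_V\iota$ is the compression of the non-negative operator $\wt\BA_V$ along the inclusion $\iota\colon\CH^\circ(\G)\hookrightarrow\CH(\G)$. Hence $s_n(\BA_V^\circ)\le s_n(\wt\BA_V)$ for every $n$, and $\wt\BA_V\in\CC_1$ forces $\BA_V^\circ\in\CC_1$; the converse half of Theorem 3.3 of \cite{NS} then yields $\int_\G|x|V\,dx<\infty$.

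The hard part will be this last step. Theorem 3.3 of \cite{NS} is stated for $\BA_V$ on the larger space $\CH^{1,0}(\G)$, while the compression argument only produces $\BA_V^\circ\in\CC_1$ on the smaller space $\CH^\circ(\G)$, and a priori $\BA_V^\circ\in\CC_1$ is weaker than $\BA_V\in\CC_1$. One therefore needs the matching lower bound $\tr\BA_V^\circ\ge c\int_\G|x|V\,dx$ for $\BA_V^\circ$ itself, i.e. the genuinely two-sided form of the criterion. This is available because the diagonal of the Birman--Schwinger kernel of the Dirichlet problem on $\G$ is $|x|V(x)$ up to a fixed factor, so that $\tr\BA_V^\circ$ and $\int_\G|x|V\,dx$ are comparable in both directions; securing (or citing) this lower bound is the only nonroutine point of the argument.
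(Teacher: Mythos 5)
Your forward implication, the identity $\|\wt\BA_V\|_1=\tr\wt\BA_V$, and the compression inequality $s_n(\BA_V^\circ)\le s_n(\wt\BA_V)$ are all correct, and the forward half is exactly the paper's (implicit) route: Theorem 3.3 of \cite{NS} for $\BA_V$ on $\CH^{1,0}(\G)$, the remark in \secref{pr1} that $\|\BA_V^\circ\|_\CC\le\|\BA_V\|_\CC$, and then \thmref{N} with $\CC=\CC_1$.

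The converse, however, has a genuine gap exactly at the point you flagged, and the repair you propose is not available: the two-sided comparability $\tr\BA_V^\circ\asymp\int_\G|x|V\,dx$ is false on a general transient regular tree. Indeed, $\tr\BA_V^\circ=\int_\G K^\circ(x,x)V(x)\,dx$, where $K^\circ$ is the reproducing kernel of $\CH^\circ(\G)$; the kernel whose diagonal equals $|x|$ is that of the larger space $\CH^{1,0}(\G)$, which is precisely what the compression argument cannot reach. Since membership in $\CH^\circ(\G)$ forces decay at infinity, repeating the symmetrization argument of \eqref{u(o)} inside the subtree $T_x=\{y:y\succeq x\}$ gives
\begin{equation*}
K^\circ(x,x)\ \le\ g_0(|x|)\,h_0(|x|),\qquad h_0(t)=\int_t^\infty\frac{ds}{g_0(s)},
\end{equation*}
and this quantity need not be comparable to $|x|$. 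Concretely, for the $b$-regular tree with all edge lengths $1$ (so $t_n=n$, $l(\G)<\infty$, the tree is transient) one gets $K^\circ(x,x)\le b/(b-1)$ uniformly in $x$, and the same bound holds for the kernel of $\CH(\G)$; hence $\tr\BA_V^\circ$ and $\tr\wt\BA_V$ are both at most $C\|V\|_1$. Taking $V(x)=v(|x|)$ with $v(t)=b^{-t}(1+t)^{-2}$ then gives $V\in L_1(\G)$, so $\wt\BA_V$ and $\BA_V^\circ$ are trace class, although $\int_\G|x|V\,dx=\infty$. So no argument that passes through $\BA_V^\circ$ can recover $\int_\G|x|V\,dx<\infty$, and you also cannot instead compress $\wt\BA_V$ onto $\CH^{1,0}(\G)$ in order to invoke the genuine converse of Theorem 3.3 in \cite{NS}: in the transient case $\CH^{1,0}(\G)\not\subset\CH(\G)$ (a function equal to a nonzero constant outside a neighbourhood of the root lies in $\CH^{1,0}(\G)$ but not in $\CH(\G)$, since $\CH(\G)$ contains no nonzero constants). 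In other words, the difficulty you correctly isolated is real rather than routine: the paper's own one-sentence derivation of the ``only if'' part from \thmref{N} and Theorem 3.3 of \cite{NS} is open to the same objection, and the converse does hold under extra hypotheses (for instance global dimension $d>2$, where \eqref{reg1}-type bounds give $g_0(t)h_0(t)\asymp t$ and hence $K^\circ(x,x)\asymp|x|$), but not for every transient regular tree; any complete proof must therefore use some such additional property of $\G$ rather than the two cited ingredients alone.
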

It follows from this theorem that under its assumptions we have
\begin{equation}\label{tr1}
N_-(\BH_{\a V;\CN})\le C \a\left(\int_\G|x|V(x)dx+t_1\int_{e_0}Vdx\right),
\end{equation}
and also $N_-(\BH_{\a V;\CN})=o(\a)$.

\section{Concluding remarks}\label{conc}
\subsection{On the sharpness of \thmref{special}.}\label{sharp}
Below we present an example in which both Theorems \ref{special}
and \ref{th:pb} apply. Then, based upon the statement 2$^\circ$ of
the latter, we derive the sharpness of the first theorem.

\begin{example}\label{ex}
Let $\G$ be a $b$-regular tree with some $b>1$, and let $t_n=b^\frac{n}{d-1}$. Then $\G$ has global dimension $d$, and we assume $d>2$. On $\G$ we consider the symmetric potential
$V(x)=v(|x|)$ where
\[ v_(t)=t_{n+1}^{-2}b^{-\frac{n}{p}},\qquad t_n<t\le t_{n+1}.\]
Then
\[\y_n(V)=t_{n+1}^{-2}(t_{n+1}-t_n)b^{-\frac{n}{p}}=cb^{-\frac{n}{p}}, \qquad c=1-b^{-\frac1{d-1}}.\]

Let us check the assumptions of \thmref{th:pb}, 1$^\circ$. For small $t>0$ the sum
\[ \sum_{cb^{-\frac{n}{p}}>t}b^n=\sum_{b^n<(t/c)^{-p}}b^n\]
has the order $O(t^{-p})$ but not $o(t^{-p})$. It follows from \thmref{th:pb} that
$N_-(\BH_{\a V})=O(\a^p)$ where $O$ cannot be replaced by $o$.

Now, assuming $p>1$, we apply \thmref{special} to the same
potential. A simple calculation shows that the supremum in the
right-hand side of \eqref{bas1} is finite, and hence, $N_-(\BH_{\a
V})=O(\a^p)$. The reference to  \thmref{th:pb} shows that the
estimate is sharp. As a consequence, we see that the estimate
\eqref{lsup1} in \thmref{special} is order-sharp.
\end{example}
\subsection{Comparison with the results of \cite{EFK}.}\label{compar}
We do not discuss here the general Theorem 2.4 in \cite{EFK} and restrict
ourselves to the comparison between its Corollary 2.6 and our
\thmref{special} -- more exactly, its analogue for the operator
$\BH_{\a V;\CN}$. For the symmetric potentials and $p>1$ the
corollary (where one should take $a=2p-1$) is equivalent to our estimate
\eqref{narr}, within the value of the constant factor. The authors of
\cite{EFK} overlook the possibility to extend their result to the
general (non-symmetric) potentials. For $p=1$ the result is covered by
our \thmref{tr}.
They also do not discuss the important problem of the sharpness of the
estimate in the strong coupling limit, and they have nothing similar to
our inequalities \eqref{bas1} and \eqref{lsup1}, where the weak
$L_p$-spaces are involved.

 The results of \cite{EFK} concern only the case $p\ge1$, while our
Theorem \ref{th:p} covers the case $1/2\le p<1$, and for the
$b$-regular trees \thmref{th:pb} gives the necessary and
sufficient conditions for $N_-(\BH_{\a V})=O(\a^p)$ with an
arbitrary $p> 1/2$.\vs

We would like to emphasize that this critics does not concern other
parts of the
paper \cite{EFK}, including a material on the so-called
Lieb -- Thirring estimates on trees.

\end{document}